\documentclass{article}
\usepackage{graphicx} 
\usepackage{mathrsfs}
\usepackage{tikz-cd}
\usepackage{hyperref}
\usepackage{amsmath}
\usepackage{quiver}
\usepackage{amsthm}
\usepackage{amssymb}
\usepackage{comment}
\usepackage[skip=10pt]{parskip}
\usepackage[left=2cm,right=2cm,top=2cm,bottom=2cm]{geometry}
\usepackage{ytableau}
\usepackage{multicol}
\renewcommand\labelenumi{\upshape(\roman{enumi})}
\renewcommand\theenumi\labelenumi

\usepackage{tikz}
\usepackage{xcolor}
\usepackage{bm}
\newcommand{\cross}[2]{
\begin{scope}[xshift=#1cm,yshift=-#2cm]
\draw[very thick] (0,-0.5) -- (0.45,-0.5);
\draw[very thick] (0.55,-0.5) -- (1,-0.5);
    \draw[very thick] (0.5,0) -- (0.5,-1);
\end{scope}
}
\newcommand{\arcs}[2]{
\begin{scope}[xshift=#1cm,yshift=-#2cm]
    \draw[very thick] (0,-0.5) arc (-90:0:0.5cm);
    \draw[very thick] (0.5,-1) arc (180:90:0.5cm);
\end{scope}
}

\newcommand{\GRIDwitnum}[1]{
\begin{scope}
\foreach \n in {1,...,#1} \node at (\n-0.5,0.5){$\bm{\mathsf{\n}}$};
\foreach \n in {1,...,#1} \node at (-0.5,-\n+0.5){$\bm{\mathsf{\n}}$};
\draw[black!25!white] (0,-#1cm) |- (#1cm,0);
\foreach \n in {0,...,{#1}} \draw[black!25!white] (\n+1,0) -- (\n+1,\n-#1);
\foreach \n in {0,...,{#1}} \draw[black!25!white] (0,-\n-1) -- (#1-\n,-\n-1);
\foreach \n in {1,...,{#1}} \draw[very thick] (\n-1,-#1+\n-0.5) arc (-90:0:0.5cm);
\end{scope}
} 

\newtheorem{theorem}{Theorem}[section]
\newtheorem{lemma}[theorem]{Lemma}

\newtheorem{proposition}[theorem]{Proposition}

\newtheorem{claim}[theorem]{Claim}

\theoremstyle{definition} 
\newtheorem{example}[theorem]{Example}

\newtheorem{remark}[theorem]{Remark}
\newtheorem{definition}[theorem]{Definition}

\newtheorem{fact}[theorem]{Fact}

\title{Forest Polynomials and Pattern Avoidance}
\author{Annie Guo and Dora Woodruff}
\date{}

\begin{document}

\maketitle

\begin{abstract}
    Forest polynomials, recently introduced by Nadeau and Tewari, can be thought of as a quasisymmetric analogue for Schubert polynomials. They have already been shown to exhibit interesting interactions with Schubert polynomials; for example, Schubert polynomials decompose positively into forest polynomials. We further describe this relationship by showing that a Schubert polynomial $\mathfrak{S}_w$ is a forest polynomial exactly when $w$ avoids a set of $6$ patterns. This result adds to the long list of properties of Schubert polynomials that are controlled by pattern avoidance. 
\end{abstract}

\section{Introduction}

Forest polynomials, first introduced by Nadeau and Tewari in their study of the permutahedral variety \cite{forestpolynomials1}, are a quasisymmetric analogue for Schubert polynomials. They form a linear basis for a quasisymmetric variant of the coinvariant algebra \cite{forestpolynomials1}, represent cohomology classes for a quasisymmetric variant of the flag variety \cite{quasisymmetricflag}, and have rich combinatorial properties, many of which parallel the combinatorial properties of Schubert polynomials (for example, see \cite{quasisymmetricdivideddifferences}). 

Forest polynomials and Schubert polynomials also interact with each other in interesting ways. For instance, Nadeau and Tewari showed that Schubert polynomials decompose positively into forest polynomials \cite{forestpolynomials1}, and pipe dream-like models have been developed for forest polynomials \cite{quasisymmetricdivideddifferences}. Given this, it is natural to ask: when do these two classes of polynomials coincide?

One would hope for the answer to this question to be expressible in terms of \textit{pattern avoidance}. Indeed, pattern avoidance controls many combinatorial and algebraic properties of Schubert polynomials. For example, a Schubert polynomial $\mathfrak{S}_w$ is a flagged Schur polynomial if and only if $w$ avoids $2143$ \cite{LascouxSchutzenberger1985}, it is a standard elementary monomial if and only if $w$ avoids $312$ and $1432$ \cite{singleSEMs}, and its coefficients are all $0$ or $1$ if and only if $w$ avoids a list of $12$ patterns \cite{01Schubert}. Much work has also been dedicated to bounding the principal specialization of $\mathfrak{S}_w$ in terms of pattern counts in $w$ (for example, see \cite{132patterns} or \cite{principalspecializations}). 

Along these lines, our main result  is that the intersection between Schubert polynomials and forest polynomials is also controlled by pattern avoidance: 
\vspace{1em}

\begin{theorem}\label{thm:patternavoidance}
    A Schubert polynomial $\mathfrak{S}_w$ is also a forest polynomial if and only if $w$ avoids the following set of patterns: 
    \[\{1432, 2413, 2431, 14523, 32154, 341265\}\]
\end{theorem}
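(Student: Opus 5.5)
We will use the positive expansion $\mathfrak{S}_w=\sum_F c_{w,F}\,\mathfrak{P}_F$ of Nadeau and Tewari. Both families have distinct leading monomials: the forest polynomial $\mathfrak{P}_F$ has a leading term $x^{\mathrm{code}(F)}$ with coefficient $1$, and $\mathfrak{S}_w$ has leading term $x^{\mathrm{code}(w)}$. So $\mathfrak{S}_w$ is a forest polynomial if and only if the expansion has exactly one term. Comparing leading terms, that term must be $\mathfrak{P}_F$ with $\mathrm{code}(F)=\mathrm{code}(w)$, where $\mathrm{code}(F)$ is the code of an indexed forest. A first necessary condition is therefore that $\mathrm{code}(w)$ is the code of some forest. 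We then need $\mathfrak{S}_w=\mathfrak{P}_F$ exactly. A convenient test is that both sides have nonnegative coefficients and the difference $\mathfrak{S}_w-\mathfrak{P}_F$ is forest-positive. Hence equality holds if and only if $\mathfrak{S}_w(1,\dots,1)=\mathfrak{P}_F(1,\dots,1)$, or equivalently the monomials agree, for example by comparing pipe dreams with forest "pipe dreams".

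For the forward direction (pattern containment implies not a forest polynomial), we first check the six patterns directly. For each $p$ in the list, we compute $\mathfrak{S}_p$ and show its forest expansion has at least two terms. Then we need monotonicity: if $w$ contains $p$ and $\mathfrak{S}_p$ is not a forest polynomial, then neither is $\mathfrak{S}_w$. The plan is to embed a witness. From two distinct forests in the expansion of $\mathfrak{S}_p$, or from a pipe dream of $p$ not accounted for by forest objects, we build a corresponding witness for $w$, by inserting the remaining entries of $w$ in the style of the pattern-containment arguments for $0$-$1$ Schubert polynomials of Fink–M\'esz\'aros–St.~Dizier. An alternative that may be cleaner is to use the quasisymmetric divided differences $\partial$-style operators $T_i$ characterizing forest polynomials. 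Forest polynomials are closed under these operators, sending $\mathfrak{P}_F$ to $\mathfrak{P}_{F/i}$ or $0$. We would then show that when $w$ contains $p$, some sequence of these operators together with the specializations $x_i\mapsto 0$, which are compatible with both families, carries $\mathfrak{S}_w$ to $\mathfrak{S}_p$ (up to shifting). That would reduce the claim to the base cases.

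For the converse, we would give a structural description of permutations avoiding $\{1432,2413,2431,14523,32154,341265\}$. We expect them to be built recursively, for instance by a decomposition according to the position of $n$ or of the last descent. Along this recursion, we would show that $\mathrm{code}(w)$ is a forest code and that $\mathfrak{S}_w$ satisfies the same recursion as $\mathfrak{P}_F$. Concretely, we would use the recursive description of $\mathfrak{P}_F$ via removing a terminal node, through the operator $T_i$, and match it with a divided-difference identity $\partial_i\mathfrak{S}_w=\mathfrak{S}_{ws_i}$. Matching these requires, at the right index $i$, that $T_i$ and $\partial_i$ agree on $\mathfrak{S}_w$. This agreement is where avoidance of $2413$, $2431$, $1432$, and the longer patterns should enter.

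The main obstacle is this converse step. It needs both a usable characterization of the avoiding class and an identity showing that the forest recursion and the Schubert recursion coincide on it. We would start by testing small cases ($n\le 6$) to guess the recursive structure. We would then prove the equality via the principal specialization criterion above, counting reduced pipe dreams of $w$ against the forest objects of $F$ and exhibiting an explicit bijection for avoiding $w$.
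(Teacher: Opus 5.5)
\textbf{There is a genuine gap: the proposal sets up the problem correctly but proves neither direction.}

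Your reduction is fine. Forest-positivity of $\mathfrak{S}_w$ plus the leading-term comparison shows that $\mathfrak{S}_w-\mathfrak{P}_F$ is forest-positive for the unique $F$ with $\mathrm{code}(F)=\mathrm{code}(w)$. So equality is equivalent to $\mathfrak{S}_w(1,\dots,1)=\mathfrak{P}_F(1,\dots,1)$. Your ``first necessary condition'' is vacuous, though, since every weak composition is the code of a unique indexed forest.

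For necessity, the whole content is monotonicity under pattern containment, and you only name two strategies. The operator route fails as stated. Literal specializations $x_i\mapsto 0$ preserve neither family: $\mathfrak{S}_{132}=x_1+x_2$ is also the forest polynomial of code $(0,1)$, and $x_1\mapsto 0$ sends it to $x_2$. That is neither a Schubert polynomial nor forest-positive, since $x_2=(x_1+x_2)-x_1$. The zero-and-shift operators $\mathsf{R}_if=f(x_1,\dots,x_{i-1},0,x_i,\dots)$ and $\mathsf{T}_i f=(\mathsf{R}_{i+1}f-\mathsf{R}_if)/x_i=\mathsf{R}_{i+1}\partial_i f$ do not send Schubert polynomials to Schubert polynomials. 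For example, $\mathsf{R}_3\mathfrak{S}_{2143}=x_1^2+x_1x_2=\mathfrak{S}_{312}+\mathfrak{S}_{231}$. So even granting that the $\mathsf{T}_i$ send forest polynomials to forest polynomials or $0$, you would still have to show the following: for every occurrence of $p$ in $w$, some composite carries $\mathfrak{S}_w$ to a polynomial whose forest expansion has at least two terms. Nothing like this is given.

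The paper instead uses a concrete certificate that is stable under insertion. It handles $1432$ separately via Gao's theorem: $w$ avoids $1432$ iff every reduced pipe dream of $w$ is reachable from $P_{\mathrm{bott}}(w)$ by simple ladder moves. For $1432$-avoiding $w$, it shows $\mathfrak{S}_w\neq\mathfrak{P}_F$ iff $P_{\mathrm{bott}}(w)$ has a bad pair. This is a pair of crossings $c,c'$ whose forest vertices are parent and right child, such that $c'$ can be slid by simple ladder moves into the row of $c$. It then checks that each insertion $I_i^k$ carries a bad pair to a bad pair.

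For sufficiency the proposal has no argument. The avoiding class is to be guessed from $n\le 6$. The recursion you sketch would not determine $\mathfrak{S}_w$ even if verified, because agreement at a single ``right index $i$'' does not determine a polynomial. To use the $\mathsf{T}$-characterization of forest polynomials you would need two things:
\begin{enumerate}
\item $\mathsf{T}_i\mathfrak{S}_w=0$ for every $i$ that is not a terminal node of $F$;
\item $\mathsf{T}_i\mathfrak{S}_w=\mathfrak{P}_{F/i}$ for every terminal $i$.
\end{enumerate}
But $\mathsf{T}_i\mathfrak{S}_w=\mathsf{R}_{i+1}\mathfrak{S}_{ws_i}$ is in general a sum of several Schubert polynomials; for instance $\mathsf{T}_2\mathfrak{S}_{2413}=\mathfrak{S}_{312}+\mathfrak{S}_{231}$. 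Controlling this on the avoiding class is the entire difficulty, and it is not addressed.

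Your closing idea, an explicit bijection between pipe dreams of $w$ and labelings of $F$, is the route the paper takes. What is missing is exactly its content:
\begin{enumerate}
\item the weight-preserving injection $\psi_w$, which slides the crossing attached to each vertex $v$ of $F$ along its diagonal to row $f_F(v)$ by simple ladder moves, with the labeling inequalities keeping the slides unobstructed;
\item Gao's theorem, which shows that for $1432$-avoiding $w$ any failure of surjectivity is witnessed by a bad pair;
\item a case analysis of a bad pair according to whether the diagonal path of $c'$ is obstructed. This yields $2413$, $2431$, $32154$ or $341265$ in the unobstructed case, and $1432$, $2431$ or $14523$ in the obstructed case.
\end{enumerate}
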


There is a second interpretation of this result worth mentioning. When they introduced forest polynomials in \cite{forestpolynomials1}, Nadeau and Tewari defined $\Omega$-equivalence, an equivalence relation on the set of reduced words of a given permutation. To each equivalence class $\mathcal{C}$, they attach a corresponding forest polynomial $\mathfrak{F}(\mathcal{C})$.
\vspace{1em}

\begin{theorem}
    if $\mathcal{G}_w$ denotes the set of $\Omega$-equivalence classes of the reduced words of $w^{-1}$, then:

\[\mathfrak{S}_w = \sum_{\mathcal{C} \in \mathcal{G}_w} \mathfrak{F}(\mathcal{C})\]

\end{theorem}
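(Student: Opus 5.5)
This identity is essentially a repackaging of the Billey--Jockusch--Stanley (BJS) formula, so the plan is to start from BJS and regroup its terms along $\Omega$-equivalence classes. Recall that
\[\mathfrak{S}_w=\sum_{a\in \mathrm{Red}(w^{-1})}\ \sum_{b\in C(a)} x_{b_1}\cdots x_{b_\ell},\]
where $C(a)$ is the set of \emph{compatible sequences} for $a=a_1\cdots a_\ell$. These are the weakly increasing sequences $b$ with $b_i\le a_i$ and $b_i<b_{i+1}$ whenever $a_i<a_{i+1}$. (The indexing convention $w$ versus $w^{-1}$ is fixed to match the paper's statement.) Nadeau and Tewari define the forest polynomial of a class by a parallel slide-type expansion, $\mathfrak{F}(\mathcal{C})=\sum x_{b_1}\cdots x_{b_\ell}$. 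Here the sum runs over a set of ``$\Omega$-compatible'' sequences attached to the class $\mathcal{C}$; equivalently, it is a sum over compatible pairs $(a,b)$ with $a\in\mathcal{C}$, subject to a canonical-representative condition.

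\textbf{Step 1.} Partition the BJS summands by class. Write
\[\mathfrak{S}_w=\sum_{\mathcal{C}\in\mathcal{G}_w}\ \sum_{a\in\mathcal{C}}\ \sum_{b\in C(a)} x^b.\]
It then suffices to show, for each class $\mathcal{C}$, that
\[\sum_{a\in\mathcal{C}}\sum_{b\in C(a)}x^b=\mathfrak{F}(\mathcal{C}).\]

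\textbf{Step 2.} Set up a bijection. The pairs $(a,b)$ with $a\in\mathcal{C}$ and $b\in C(a)$ should correspond bijectively to the objects indexing $\mathfrak{F}(\mathcal{C})$, for instance decorated indexed forests (forests with a weakly compatible labeling of their internal nodes), with weight preserved. I would build this using the fact that $\Omega$-moves are the commutation moves $a_ia_{i+1}\to a_{i+1}a_i$ that are allowed in a specific restricted way. Given a pair $(a,b)$, the weakly increasing $b$ cuts $a$ into blocks of equal $b$-value. Within each block $a$ is strictly decreasing, and the block carries the label $b$. One then reads off the forest $F(\mathcal{C})$, which is an invariant of the class, with its nodes labeled by $b$-values. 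Conversely, a labeled forest is linearized by sorting its nodes by label, and within equal labels by decreasing position. This recovers a unique $a\in\mathcal{C}$ together with its $b$.

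\textbf{Step 3.} Check the details. I need to show three things.
\begin{enumerate}
\item The linearization lands in $\mathcal{C}$. This follows because any two linear extensions of the forest poset are $\Omega$-equivalent.
\item The constraint $b_i\le a_i$ matches the flag condition in the forest expansion.
\item The strictness condition $a_i<a_{i+1}\Rightarrow b_i<b_{i+1}$ is exactly what makes the decomposition into blocks unique.
\end{enumerate}

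The main obstacle is Step 2. The subtlety is that in BJS the same monomial can arise from different reduced words within one class, and I must check that these are counted correctly against the forest side. In other words, I need the precise statement that the class $\mathcal{C}$ is exactly the set of linear extensions of the poset of a forest. If instead $\Omega$-equivalence is coarser than ordinary commutation equivalence, I would first try to prove that each $\Omega$-class is a union of commutation classes. I would then apply the known fact that the BJS sum over a single commutation class equals a slide-like polynomial, and aggregate these to obtain $\mathfrak{F}(\mathcal{C})$, matching Nadeau--Tewari's definition of $\mathfrak{F}$ as the sum of such fundamental slides.
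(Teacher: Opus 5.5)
The paper does not prove this statement. It quotes it in the introduction from Nadeau--Tewari \cite{forestpolynomials1} as background, so your argument has to stand on its own, and as it stands it has a genuine gap.

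Step 1 is only a regrouping of the Billey--Jockusch--Stanley sum. All the content is in Step 2, and there you never fix what $\Omega$-equivalence and $\mathfrak{F}(\mathcal{C})$ are; you only conjecture their shape. Your description of $\mathfrak{F}(\mathcal{C})$ as a sum over pairs $(a,b)$ with $a\in\mathcal{C}$ \emph{subject to a canonical-representative condition} conflicts with Step 1, which needs every such pair counted. If instead $\mathfrak{F}(\mathcal{C})$ were the full class sum, the identity would be a tautology, and the real claim, that this sum is a single forest polynomial, would remain unproved.

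The bijection in Step 2 is also not well defined. A node cannot simply carry the letter $\rho_F(v)$. For $w=312$, both internal nodes of the forest of code $(2,0,0)$ have $\rho_F=1$ and the same label $1$, while every reduced word uses the letters $1$ and $2$. So ``sort by label, break ties by decreasing position'' and ``$b_i\le a_i$ matches $f_F(v)\le\rho_F(v)$'' have no content until you say how letters attach to nodes, and that is exactly where the combinatorics lies.

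More seriously, both structural hypotheses you offer are false. Small cases show this using only the statement itself and the facts that forest polynomials are nonzero, have nonnegative coefficients, and form a basis.

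\emph{First case, $w=321=w^{-1}$.} The reduced words are $121$ and $212$, and $\mathfrak{S}_{321}=x_1^2x_2$ is a single forest polynomial, so $\{121,212\}$ is one class. Yet:
\begin{enumerate}
\item no commutation relates these two words;
\item only one of them admits a compatible sequence;
\item the forest of code $(2,1,0)$ is a $3$-chain, with one linear extension and one labeling.
\end{enumerate}
Hence $\Omega$-moves are not restricted commutations, and a class is not the set of linear extensions of its forest.

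\emph{Second case, $w=2413$.} Here $w^{-1}=3142$ has exactly two reduced words, $213$ and $231$, which differ by one commutation. But $\mathfrak{S}_{2413}=x_1^2x_2+x_1x_2^2$ is the sum of the forest polynomials of codes $(2,1)$ and $(1,2)$, so these two words lie in different classes. Hence $\Omega$-classes are not unions of commutation classes either, and your fallback of aggregating commutation-class sums cannot work.

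What is missing is the actual content of Nadeau--Tewari's theorem:
\begin{enumerate}
\item the definition of $\Omega$-equivalence, which by these examples must allow some braid-type identifications while forbidding some commutations;
\item the rule assigning to each class its forest, and to each letter a node;
\item a proof that compatible pairs within a class are in weight-preserving bijection with valid labelings of that forest.
\end{enumerate}
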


Therefore, asking which Schubert polynomials are  forest polynomials is equivalent to asking which permutations $w$ have exactly one $\Omega$-equivalence class. 

\subsection{Organization}
 In section $2$, we will go over the combinatorial model for forest polynomials we use to prove Theorem \ref{thm:patternavoidance}: \textit{labeled binary forests.} (This model was also introduced by Nadeau and Tewari in \cite{forestpolynomials1}). Then, we will quickly review some combinatorics of permutations and pipe dreams. In section $3$, we describe a weight-preserving injection $\psi_w$ sending labelings of a binary forest $F$ to (reduced) pipe dreams for a permutation $w$ (with the \textit{same code} as $F$). In section $4$, we show that if $w$ contains a forbidden pattern, then our injection $\psi_w$ cannot be surjective, proving one direction of Theorem \ref{thm:patternavoidance}. Finally, in section $5$, we show that the forbidden patterns listed in Theorem\ref{thm:patternavoidance} are the only possible obstructions to the bijectivity of $\psi_w$, completing the proof. 

\section{Preliminaries}

\subsection{Forest polynomials and indexed forests}

This subsection roughly summarizes the definitions given in section $3$ of \cite{forestpolynomials1}. Let $S$ be a finite set of positive integers; then, $S$ decomposes as $I_1 \sqcup I_2 \dots \sqcup I_k$, where each $I_i$ is a maximal subset of \textit{consecutive} integers in $S$. A \textit{binary indexed forest $F$ supported on $S$} is the data of a binary tree $T_i$ for each $i \leq j$, whose leaves are attached to the integers $I_i$. Figure \ref{fig:example_forest} provides an example of such a binary indexed forest $F$.

\begin{figure}
\centering
    \begin{tikzpicture}[x=0.8cm,y=0.7cm,baseline=(base)]
  \coordinate (base) at (1,0);
  \draw (1,0) -- (10,0);
  \foreach \i in {1,...,10}{
    \node[below=2pt] at (\i,0) {\small $\i$};
    \draw[thick] (\i-0.1,0.1) -- (\i+0.1,-0.1);
    \draw[thick] (\i-0.1,-0.1) -- (\i+0.1,0.1);
  }

  \fill (2.5,1.5) circle (1.5pt);
  \fill (3.5,0.5) circle (1.5pt);
  \fill (3,1.0)  circle (1.5pt);
  \fill (3.5,2.5) circle (1.5pt);

  \draw[thick]
    (1,0) -- (2.5,1.5) -- (3,1)
    (3,0) -- (3.5,0.5) -- (4,0)
    (2,0) -- (3,1.0) -- (3.5,0.5)
    (2.5,1.5) -- (3.5, 2.5) -- (5.5, 0.5);

  \fill (8.5,0.5) circle (1.5pt);
  \draw[thick] (8,0) -- (8.5,0.5) -- (9,0);

  \fill (5.5,0.5) circle (1.5pt);
  \draw[thick] (5,0) -- (5.5,0.5) -- (6,0);

  \node[black,scale=1] at (5.5,4) {$F$};
\end{tikzpicture}
    \caption{A binary indexed forest $F$.}
    \label{fig:example_forest}
\end{figure}
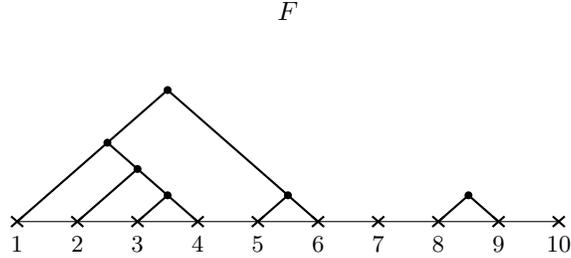

The non-leaf vertices of $F$ are called \textit{interior vertices}, denoted by $\text{IN}(F)$. These binary indexed forests will form our indexing set for the forest polynomials $\mathfrak{F}_F$. 

There is an important function $\rho_F: \text{IN}(F) \to \mathbb{Z}$. It sends an interior vertex $v \in F$ to the leaf label reached by following left edges all the way down. Figure \ref{fig:example_labeled} illustrates the labeling yielded from applying $\rho_F$ on each $v \in F$. 

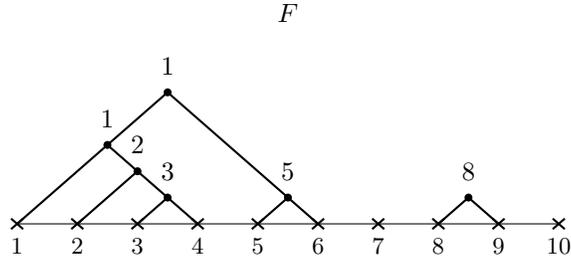
\begin{figure}
    \centering
    \begin{tikzpicture}[x=0.8cm,y=0.7cm,baseline=(base)]
  \coordinate (base) at (1,0);
  \draw (1,0) -- (10,0);
  \foreach \i in {1,...,10}{
    \node[below=2pt] at (\i,0) {\small $\i$};
    \draw[thick] (\i-0.1,0.1) -- (\i+0.1,-0.1);
    \draw[thick] (\i-0.1,-0.1) -- (\i+0.1,0.1);
  }

  \fill (2.5,1.5) circle (1.5pt);
  \fill (3.5,0.5) circle (1.5pt);
  \fill (3,1.0)  circle (1.5pt);
  \fill (3.5,2.5) circle (1.5pt);

  \draw[thick]
    (1,0) -- (2.5,1.5) -- (3,1)
    (3,0) -- (3.5,0.5) -- (4,0)
    (2,0) -- (3,1.0) -- (3.5,0.5)
    (2.5,1.5) -- (3.5, 2.5) -- (5.5, 0.5);

  \node at (2.5,2) {$1$};
  \node at (3,1.5)  {$2$};
  \node at (3.5,1) {$3$};
  \node at (3.5,3) {$1$};

  \fill (8.5,0.5) circle (1.5pt);
  \draw[thick] (8,0) -- (8.5,0.5) -- (9,0);
  \node[text=black] at (8.5,1.0) {$8$};

  \fill (5.5,0.5) circle (1.5pt);
  \draw[thick] (5,0) -- (5.5,0.5) -- (6,0);
  \node[text=black] at (5.5,1.0) {$5$};

  \node[black,scale=1] at (5.5,4) {$F$};
\end{tikzpicture}
    \caption{Forest labeling $f_F$ given by $\rho_F$ for $F$ from Figure \ref{fig:example_forest}}
    \label{fig:example_labeled}
\end{figure}

\begin{definition}\label{def:validlabelings}
    A forest labeling $f_F: \text{IN}(F) \to \mathbb{Z}$ is called \textit{valid} if it satisfies the following three conditions: 
    \begin{enumerate}
        \item For all $v$, $f_F(v) \leq \rho_F(v)$
        \item If $w$ is a left child of $v$, then $f_F(v) \leq f_F(w)$ and
        \item If $w$ is a right child of $v$, then $f_F(v) < f_F(w)$. 
    \end{enumerate}
\end{definition}

Now, we use the following combinatorial model of Nadeau and Tewari as our definition for forest polynomials:
\vspace{2em}

\begin{definition}
    Let $F$ be a binary indexed forest. The forest polynomial $\mathfrak{F}_F$ is defined by 

    \[\sum_{f_F}\prod_{v \in \text{IN}(F)} x_{f_F(v)}\]

    where the sum runs over all valid forest labelings of $F$. 
\end{definition}

\vspace{1em}

\begin{example}
    Consider the example given in Figure \ref{fig:forest_labelings}. We compute the forest polynomial by adding a term for each valid labelings, obtaining:
    \[\mathfrak{F}_F=x_1^2x_2x_3(x_2+x_3+x_4+x_5)(x_1+x_2+\dots+x_8)\]

\begin{figure}
    \centering
    \begin{tikzpicture}[x=0.8cm,y=0.7cm,baseline=(base)]
  \coordinate (base) at (1,0);
  \draw (1,0) -- (10,0);
  \foreach \i in {1,...,10}{
    \node[below=2pt] at (\i,0) {\small $\i$};
    \draw[thick] (\i-0.1,0.1) -- (\i+0.1,-0.1);
    \draw[thick] (\i-0.1,-0.1) -- (\i+0.1,0.1);
  }

  \fill (2.5,1.5) circle (1.5pt);
  \fill (3.5,0.5) circle (1.5pt);
  \fill (3,1.0)  circle (1.5pt);
  \fill (3.5,2.5) circle (1.5pt);

  \draw[thick]
    (1,0) -- (2.5,1.5) -- (3,1)
    (3,0) -- (3.5,0.5) -- (4,0)
    (2,0) -- (3,1.0) -- (3.5,0.5)
    (2.5,1.5) -- (3.5, 2.5) -- (5.5, 0.5);

  \node at (2.5,2) {$1$};
  \node at (3,1.5)  {$2$};
  \node at (3.5,1) {$3$};
  \node at (3.5,3) {$1$};

  \fill (8.5,0.5) circle (1.5pt);
  \draw[thick] (8,0) -- (8.5,0.5) -- (9,0);
  \node[text=black] at (8.5,1.0) {$\{1, 2, \dots8 \}$};

  \fill (5.5,0.5) circle (1.5pt);
  \draw[thick] (5,0) -- (5.5,0.5) -- (6,0);
  \node[text=black] at (6.2,1.0) {$\{2,3,4,5\}$};

  \node[black,scale=1] at (5.5,4) {$F$};
\end{tikzpicture}
    \caption{Options for forest labelings of $f_F$ of forest $F$}
    \label{fig:forest_labelings}
\end{figure}
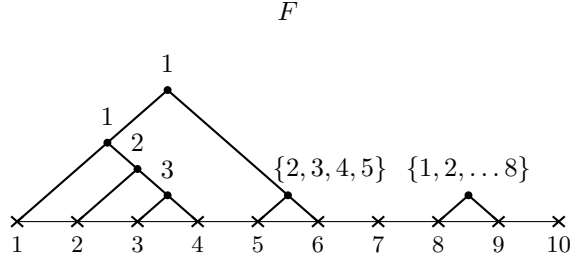
\end{example}

\subsection{Lehmer codes}

\begin{definition}
    The \textit{Lehmer code} $\text{code}(w)$ of a permutation $w \in S_n$ is the vector $(L(1), L(2) \dots L(n))$, where 
    \[L(i) = \#\{j > i| w(j) < w(i)\}\]
\end{definition}

For example, $\text{code}(15342) = (0, 3, 1, 1, 0)$. 

Note that $w \to \text{code}(w)$ defines a bijection between $S_{\mathbb{Z}}$ and the set of all nonnegative integer vectors with finitely many nonzero entries. 
\vspace{1em}

\begin{fact}
    The leading monomial (in the reverse lexicographic ordering) of the Schubert polynomial $\mathfrak{S}_w$ is $\bf{x}^{\text{code}(w)}$.  
\end{fact}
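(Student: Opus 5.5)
The plan is to read off the leading term from the pipe dream expansion of $\mathfrak{S}_w$. This is the combinatorial model we review later in this section, and we take it as known. By the Billey--Jockusch--Stanley / Fomin--Kirillov formula,
\[\mathfrak{S}_w=\sum_{P\in \mathrm{RP}(w)} \mathbf{x}^{\mathrm{wt}(P)},\]
where $\mathrm{RP}(w)$ is the set of reduced pipe dreams for $w$, and $\mathrm{wt}(P)_i$ is the number of crosses of $P$ in row $i$. We fix the ordering so that monomials are compared at the \emph{largest} index where their exponents differ, with the larger exponent winning. Under this convention the claim is that $\mathbf{x}^{\text{code}(w)}$ is the unique maximal monomial in the expansion and that it appears with coefficient $1$.

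First, I identify the bottom pipe dream $D_{\mathrm{bot}}(w)$. It has a cross at $(i,j)$ exactly when $j\le L(i)$, so row $i$ consists of $L(i)$ left-justified crosses. I check that $D_{\mathrm{bot}}(w)$ is a reduced pipe dream for $w$. One way is to read its crosses row by row, from right to left within each row, which gives the canonical reduced word
\[\prod_i \bigl(s_{i+L(i)-1}\cdots s_{i+1}s_i\bigr)\]
of $w$ (or of $w^{-1}$, depending on convention). Its weight is exactly $\text{code}(w)$.

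Second, I invoke the theorem of Bergeron and Billey. Every reduced pipe dream for $w$ is obtained from $D_{\mathrm{bot}}(w)$ by a finite sequence of ladder moves. A ladder move takes a cross at $(i,j)$, where the cells $(i-1,j),\dots,(i-m+1,j)$ and $(i-1,j+1),\dots,(i-m+1,j+1)$ are all crosses, and moves it to $(i-m,j+1)$. The weight therefore changes by $-e_i+e_{i-m}$ with $m\ge1$. This lowers the exponent at index $i$ and changes only a smaller index, so each ladder move strictly decreases the weight in our order. It follows that every $P\neq D_{\mathrm{bot}}(w)$ has weight strictly below $\text{code}(w)$. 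Hence $\mathbf{x}^{\text{code}(w)}$ is the leading monomial, and its coefficient is $1$ because no other pipe dream has the same weight.

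The main obstacle is the connectivity statement in the second step. If I did not want to cite Bergeron--Billey, I would instead argue directly that every reduced pipe dream $P$ satisfies $\mathrm{wt}(P)\le\text{code}(w)$. The approach would be to show, for every $k$, that the number of crosses in rows $\ge k$ is at most $\sum_{i\ge k}L(i)$, with equality for all $k$ forcing the bottom pipe dream. The argument would use the fact that the pipes entering at rows $\ge k$ can only be crossed within the staircase below row $k$, and that each such cross records an inversion $(i,j)$ with $i\ge k$. As a fallback, I would induct on $\ell(w_0)-\ell(w)$ using $\mathfrak{S}_{ws_i}=\partial_i\mathfrak{S}_w$ when $L(i)>L(i+1)$, and track how $\partial_i$ acts on the revlex-leading term. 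That route is more computational, so I would try the pipe dream argument first.
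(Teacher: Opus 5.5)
Your argument is correct. The paper gives no proof of this Fact; it quotes it as standard background. So your derivation is a genuinely separate route, although it is built entirely from two results the paper itself imports in its preliminaries on pipe dreams: the pipe dream formula (Theorem \ref{thm:mainpipedreamthm}) and Bergeron--Billey connectivity (Theorem \ref{thm:laddermove}).

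Your key observation is right: a ladder move changes the weight by $e_{i-m}-e_i$ with $m\ge 1$, so it strictly lowers the weight when monomials are compared at the largest differing index. This gives the unique maximizer and coefficient $1$ at no extra cost. Fixing the order explicitly was also the right call. In $\mathfrak{S}_{4132}=x_1^3x_3+x_1^3x_2$, the code monomial $x_1^3x_3$ wins only under your convention, not under lex with $x_1>x_2>\cdots$ and not under degrevlex.

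The cost of your main route is its dependence on Bergeron--Billey, which is a considerably deeper theorem than the Fact itself. Your fallback avoids it and does go through:
\begin{enumerate}
\item A tile in row $r$ is visited only by pipes entering in rows $\ge r$.
\item In a reduced pipe dream, distinct crosses realize distinct inversions.
\item Hence the number of crosses in rows $\ge k$ is at most $\sum_{i\ge k}L(i)$. This tail-sum dominance implies the revlex bound.
\item For uniqueness, suppose equality holds for every $k$. Then rows $\ge k+1$ already realize all inversions among positions $\ge k+1$, so every cross in row $k$ involves pipe $k$.
\item Pipe $k$ leaves row $k$ at its first elbow, so those crosses are left-justified. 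Hence $P=P_{\text{bott}}(w)$.
\end{enumerate}
That version needs only the pipe dream formula and reducedness, and it proves a slightly stronger dominance statement than the revlex claim.
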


Nadeau and Tewari gave an analogous notion of Lehmer codes for binary indexed forests: 
\vspace{1em}

\begin{definition}\label{def:lehmercodeforests}
    If $F$ is a binary indexed forest, then $\text{code}(F) = (L(1), L(2) \dots L(n))$, where 
    \[L(i) = \#\{v \in \text{IN}(F)| \rho_F(v) = i\}\]
\end{definition}
\vspace{1em}

\begin{example}

For the forest $F$ in Figure \ref{fig:forest_labelings}, we can compute the code as $\text{code}(F)=(2,1,1,0,1,0,0,1,0,0,\dots)$

\end{example}

The map $F \to \text{code}(F)$ also sets up a bijection between binary indexed forests and vectors of nonnegative integers with finite nonzero support. Furthermore, it is simple to see from Definition \ref{def:validlabelings} that the leading monomial (in reverse lexicographic ordering) of $\mathfrak{F}_F$ is also given by $\text{code}(F)$ (which proves that $\{\mathfrak{F}_F\}_F$ forms a basis for $\mathbb{Z}[x_1, x_2 \dots]$). 
\vspace{1em}

\begin{remark}\label{rmk:easyrmk}
    A simple observation follows: if $\mathfrak{S}_w$ is indeed equal to a forest polynomial $\mathfrak{F}_F$, there is only one possible candidate for which forest polynomial it could be: $F$ must be the unique forest satisfying $\text{code}(T) = \text{code}(w)$, since the leading monomials in the reverse lexicographic order must match. Thus, we only need to compare $\mathfrak{S}_w$ to one candidate forest polynomial. 
\end{remark}

\subsection{Pipe dreams and ladder moves}
\textit{Pipe dreams} are an important combinatorial model for Schubert polynomials developed by Billey-Jockusch-Stanley \cite{pipedreams1} (see also \cite{pipedreams2} and \cite{pipedreams} for more in-depth background on pipe dreams). Formally, a (reduced) pipe dream of $w$ is a finite subset $D \subset \mathbb{Z}_{>0} \times \mathbb{Z}_{>0}$ satisfying the following condition: the ordered product of the generators $s_{i+j-1}$ over all $(i,j)\in D$ (taken row by row from left to right, bottom to top) is a reduced word for $w$. From now on, all pipe dreams we consider will be reduced. 

 The set of pipe dreams of $w$ is denoted $\text{PD}(w)$. Every pipe dream $D$ of $w$ can be assigned a \textit{weight}: 
\[\text{wt}(D) = \prod_{(i,j) \in D} x^i\]

Bergeron and Billey \cite{pipedreams} proved that pipe dreams for $w$ correspond to monomials in $\mathfrak{S}_w$:

\vspace{2em}

\begin{theorem}\label{thm:mainpipedreamthm}
    \[\mathfrak{S}_w = \sum_{D \in \text{PD}(w)} \text{wt}(D)\]
\end{theorem}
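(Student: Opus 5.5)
The statement immediately above (Theorem~\ref{thm:mainpipedreamthm}) is the classical Billey--Jockusch--Stanley / Bergeron--Billey formula, which the paper only cites. If I had to prove it, I would use the nilCoxeter algebra method of Fomin and Kirillov. This reduces the statement to the defining divided-difference recursion for Schubert polynomials. Let $\mathcal{N}_n$ be the nilCoxeter algebra, generated by $u_1,\dots,u_{n-1}$ with the relations
\[u_i^2=0,\qquad u_iu_j=u_ju_i \ (|i-j|\ge 2),\qquad u_iu_{i+1}u_i=u_{i+1}u_iu_{i+1}.\]
A basis of $\mathcal{N}_n$ is $\{u_w\}_{w\in S_n}$, where $u_w=u_{a_1}\cdots u_{a_\ell}$ for any reduced word $a_1\cdots a_\ell$ of $w$; a non-reduced word gives $0$. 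Put $h_k(x)=1+xu_k$, and for each row $i$ set
\[A_i(x)=h_{n-1}(x)h_{n-2}(x)\cdots h_i(x).\]
Define $\mathfrak{S}(x)=A_{\bullet}(x_{\bullet})$ as the product of the $A_i(x_i)$, taken in the order that matches the paper's reading convention (rows bottom to top, each row left to right, where the cell $(i,j)$ contributes $u_{i+j-1}$).

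\textbf{Step 1: combinatorial expansion.} Expanding the product, each choice of factors $xu_{i+j-1}$ records a set $D$ of cells with weight $\prod x_i$. It survives exactly when the word is reduced, and then it contributes $u_w$. So the coefficient of $u_w$ in $\mathfrak{S}(x)$ is $P_w:=\sum_{D\in \text{PD}(w)}\text{wt}(D)$. I must be careful here whether the coefficient is indexed by $w$ or by $w^{-1}$, and fix the product order to match the paper's convention. This step is bookkeeping.

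\textbf{Step 2: initial condition.} For $w_0$, only one choice of cells gives a reduced word of length $\binom n2$: the full staircase, with $n-i$ cells in row $i$. Hence $P_{w_0}=x_1^{n-1}x_2^{n-2}\cdots x_{n-1}=\mathfrak{S}_{w_0}$.

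\textbf{Step 3: divided differences.} I need to show that $\partial_iP_w=P_{ws_i}$ when $\ell(ws_i)<\ell(w)$, and $\partial_iP_w=0$ otherwise. Together with Step 2 this forces $P_w=\mathfrak{S}_w$ for all $w$. I expect this to be the main obstacle, and I would attack it as follows.
\begin{itemize}
\item The $h_k$ satisfy $h_k(x)h_k(y)=h_k(x+y)$, commute when $|k-l|\ge2$, and satisfy the Yang--Baxter relation
\[h_k(x)h_{k+1}(x+y)h_k(y)=h_{k+1}(y)h_k(x+y)h_{k+1}(x).\]
\item From these, show that $A_i(x)A_i(y)$ is symmetric in $x,y$. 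Only the two adjacent row factors $A_i(x_i)$ and $A_{i+1}(x_{i+1})$ involve $x_i$ and $x_{i+1}$. I would rewrite their product as $A_i(x_i)A_i(x_{i+1})\,h_i(x_{i+1})^{-1}$, possibly with the last factor on the other side.
\item Then use $\partial_i(fg)=(\partial_i f)g$ for $g$ symmetric in $x_i,x_{i+1}$, together with the computation
\[\partial_i\bigl(h_i(-x_{i+1})\bigr)=\partial_i(1-x_{i+1}u_i)=u_i.\]
This gives $\partial_i\mathfrak{S}(x)=\mathfrak{S}(x)\,u_i$ (or $u_i\,\mathfrak{S}(x)$, depending on the convention).
\item Comparing coefficients of $u_v$ on both sides yields exactly the recursion. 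The vanishing case comes from $u_wu_i=0$ when $ws_i$ is shorter, and the descent case from $u_{ws_i}u_i=u_w$.
\end{itemize}
The delicate points are proving the symmetry of $A_i(x)A_i(y)$ by repeated Yang--Baxter moves, an induction on $n-i$, and keeping the left/right conventions consistent with the paper's reading order.

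If the algebraic route became unwieldy, my fallback would be a bijective proof of Step 3 via ladder and chute moves on $\text{PD}(w)$, following Bergeron--Billey. I expect that route to be messier.
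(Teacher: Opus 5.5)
You have a correct proof, but not the paper's route: the paper gives no argument for this theorem and simply imports it as a black box from the pipe-dream literature. What you sketch is the Fomin--Stanley nilCoxeter proof, and it goes through.

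\textbf{The commutativity lemma.} The step you flag as delicate is short. Write $A_i(x)=A_{i+1}(x)h_i(x)$, commute $h_i(x)$ past $A_{i+2}(y)$, split $h_i(y)=h_i(y-x)h_i(x)$, and apply Yang--Baxter with $(a,b)=(x,y-x)$. This gives
\[
A_i(x)A_i(y)=A_{i+1}(x)A_{i+1}(y)\,h_{i+1}(-x)\,h_i(y)\,h_{i+1}(x)\,h_i(x).
\]
The induction hypothesis, together with $A_{i+1}(x)h_{i+1}(-x)=A_{i+2}(x)$ and $h_i(y)A_{i+2}(x)=A_{i+2}(x)h_i(y)$, turns the right side into $A_i(y)A_i(x)$.

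\textbf{Step 3.} The $x$'s are central, so $\partial_i(G_1HG_2)=G_1(\partial_iH)G_2$ whenever $G_1,G_2$ are symmetric in $x_i,x_{i+1}$. This yields $G_1u_iG_2$. You then need to commute $u_i$ past $G_2=A_{i+2}(x_{i+2})\cdots A_{n-1}(x_{n-1})$ to reach $\partial_i\mathfrak{S}(x)=\mathfrak{S}(x)u_i$.

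\textbf{The product order must be fixed.} As written it is internally inconsistent. Your $A_i(x)=h_{n-1}(x)\cdots h_i(x)$ reads its row right to left, and Step 3 places $A_i(x_i)$ before $A_{i+1}(x_{i+1})$. So the product that works is $\mathfrak{S}(x)=A_1(x_1)A_2(x_2)\cdots A_{n-1}(x_{n-1})$: rows top to bottom, each row right to left. It is not ``rows bottom to top, each row left to right.''
\begin{itemize}
\item The $u_w$-coefficient of this product sums over pipe dreams whose right-to-left, top-to-bottom reading word is a reduced word of $w$. This matches the paper's figures, e.g.\ the $4132$ example.
\item The paper's verbal reading order produces the reversed word. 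With the usual right-to-left composition it therefore describes $\text{PD}(w^{-1})$.
\item A mixed order is simply wrong. For $n=3$, $A_2(x_2)A_1(x_1)=h_2(x_1+x_2)h_1(x_1)$ has no $u_2u_1u_2$ term, which contradicts your Step 2.
\end{itemize}
Also record in Step 1 that every $D\in\text{PD}(w)$ with $w\in S_n$ lies in the staircase $i+j\le n$. This holds because reduced words of $w$ use only $s_1,\dots,s_{n-1}$, so the finite product sees all of $\text{PD}(w)$.

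\textbf{What each route buys.} The nilCoxeter argument is short and self-contained, and it proves the divided-difference recursion for all $w$ at once. However, it says nothing about the internal structure of $\text{PD}(w)$. The paper actually relies on such structure: Bergeron--Billey's theorem that every pipe dream is reachable from $P_{\text{bott}}(w)$ by ladder moves, and Gao's refinement for $1432$-avoiders. Those require the combinatorial analysis you list as your fallback.
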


Pictorially, a pipe dream $D$ can be viewed as a \textit{strand diagram}: place a \textit{crossing} at every $(i,j) \in D$ and a pair of \textit{elbows} at every $(i, j) \notin D$. The \textit{row} of a crossing is $j$, and its \textit{column} is $i$. This process results in a pseudo-line arrangement, where each strands connects $(k, 0)$ to $(0, w(k))$ and no two strands intersect more than once. 

\vspace{2em}

\begin{example}\label{example:4132}

Below are the two pipe dreams for $w = 4132$. The weight of the first pipe dream is $x_1^3x_3$ and the weight of the second is $x_1^3x_2$, so, by Theorem \ref{thm:mainpipedreamthm}, $\mathfrak{S}_{4132} = x_1^3x_3+ x_1^3x_2$. 

\begin{center}\includegraphics[scale=0.7]{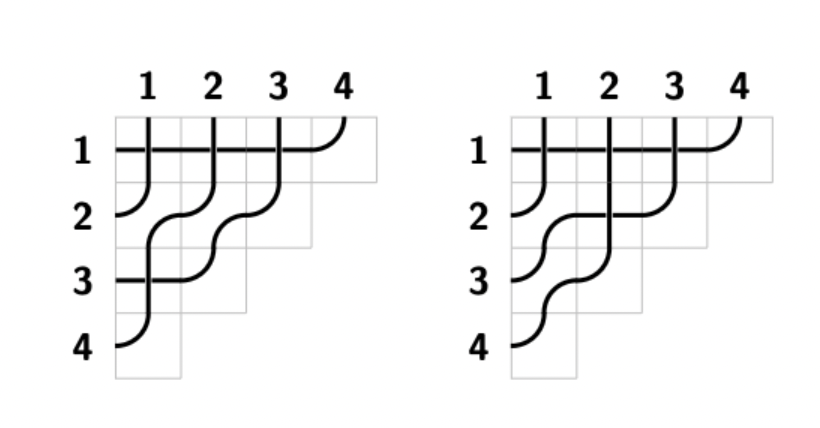}
\end{center}
\end{example}

Every permutation has a distinguished pipe dream, called the \textit{bottom pipe dream}, also defined and studied in \cite{pipedreams}:
\vspace{1em}

\begin{definition}\label{def:lehmer}
    The \emph{bottom pipe dream} of $w$ is the unique left-justified pipe dream for $w$ where the number of crossings in row $i$ is $L_i$ (where $L(w)$ is the Lehmer code of $w$). 
\end{definition}

For example, the bottom pipe dream of $4132$ is given by the left pipe dream in Example \ref{example:4132}.

Bergeron-Billey \cite{pipedreams} also gave a simple way to generate every pipe dream of $w$ given its bottom pipe dream.
\vspace{1em}

\begin{definition}
    A \textit{ladder move of order $k$} of $D \in \text{RC}(w)$ applied at a crossing $(i,j) \in D$ produces a new $D' \in \text{RC}(w)$ given by $D \setminus \{(i,j)\} \cup \{(i-k-1, j+1)\}$. A ladder move can only be applied at $(i,j) \in D$ if two conditions are satisfied:
    \begin{enumerate}
        \item $(i,j) \in D$, and $(i, j+1), (i-k-1, j), (i-k-1, j+1) \notin D$ and 
        \item For all $i-k \leq i' < i$, $(i', j), (i', j+1) \in D$
    \end{enumerate}
\end{definition}

Informally, the crossing $(i,j)$ `climbs a ladder' made of other crossings of $D$ and arrives at a new square $(i-k-1, j+1)$. 

\vspace{2em}

\begin{example}

A ladder move of order $2$ carries the bottom left crossing up by three rows and right by one column: 

    \begin{center}
        \includegraphics[scale=0.6]{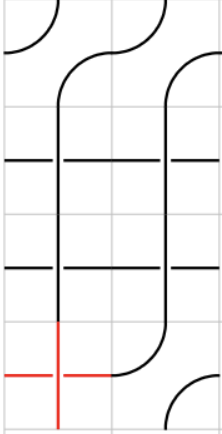}
     \hspace{3em} \includegraphics[scale=0.6]{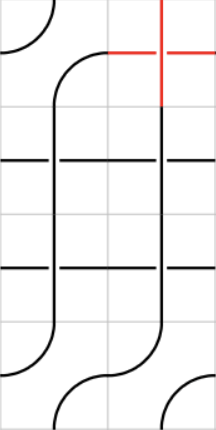}
     \end{center}
\end{example}

\vspace{2em}

\begin{theorem}[{\cite{pipedreams}}]\label{thm:laddermove}
    Every reduced pipe dream for $w$ can be obtained from the bottom pipe dream for $w$ by a sequence of ladder moves. 
\end{theorem}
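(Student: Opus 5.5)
The plan is to run the argument backwards. I will show that every reduced pipe dream $D \in \text{PD}(w)$ other than the bottom pipe dream admits an \emph{inverse ladder move}: a crossing is moved from $(i-k-1,j+1)$ back to $(i,j)$ across a full ladder, and the result is again in $\text{PD}(w)$. Each inverse move strictly decreases the statistic $\sigma(D)=\sum_{(a,b)\in D} b$, which is a nonnegative integer. So repeatedly applying inverse moves must terminate at a pipe dream admitting none. If that terminal pipe dream is forced to be the bottom pipe dream, then reversing the sequence gives the required sequence of ladder moves.

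The first step is to identify the terminal objects. I claim that any reduced pipe dream $D$ of $w$ which is \emph{justified in the ladder direction} is the bottom pipe dream of Definition \ref{def:lehmer}. Here justified means that each line of $D$ consists of an initial segment of cells $1,\dots,c_\ell$, with no gaps. For such a $D$, reading the word $s_{i+j-1}$ line by line gives a product of descending runs. A direct computation shows that this product is the permutation whose Lehmer code is $(c_1,c_2,\dots)$, and then $c=\text{code}(w)$ because the code determines $w$. Equivalently, the weight of $D$ is $\mathbf{x}^{\text{code}(w)}$, which by the Fact is the leading monomial of $\mathfrak{S}_w$. In either form, $D$ is the bottom pipe dream.

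The main step is to show that a non-justified $D$ always admits an inverse ladder move. I would choose a specific candidate. Take the extreme line (in the direction opposite to the ladder move) which contains an empty cell followed by a crossing. In that line take the first empty cell, and take the first crossing after it. Set $k$ to be the number of full cells directly adjacent that form the would-be ladder, and consider the $2\times(k+1)$ block involved. The claim is that the remaining conditions of the ladder move definition hold automatically for $D$: the two target cells are empty, and the ladder rungs are all crossings. The argument will be by contradiction using reducedness, namely that no two pipes cross twice. If some rung were missing, or the target were occupied, then one can trace the two pipes through that crossing and the adjacent elbows and exhibit a second crossing between them. Finally, I need that the move preserves the permutation. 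On the level of words, moving the crossing across the ladder amounts to the identity $s_a(s_{a+1}s_{a}\cdots)=(\cdots)s_{a+k+1}$ inside the block. This is a sequence of braid and commutation relations, since the letters outside the block commute with or are unaffected by the rearrangement. Because the length is unchanged, the new pipe dream is again reduced.

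I expect the main obstacle to be this existence argument: choosing the right extremal crossing so that the full ladder and the empty target are guaranteed. The minimality in the choice of line and column must be exactly what rules out a partial ladder. What I would try first is an induction on $k$, walking along the ladder rung by rung and using at each rung that a missing crossing there would force a double crossing of pipes.
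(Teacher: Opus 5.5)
The paper gives no proof of this statement; it only cites Bergeron--Billey, and your outline follows their argument. You apply inverse ladder moves, each of which lowers $\sum_{(r,c)\in D} c$, until you reach a left-justified pipe dream, and you identify that one with $P_{\text{bott}}(w)$ through its code. Those parts are fine. So is the word-level check: the moving generator only passes letters it commutes with, though with $k$ rungs its index rises by exactly $k$, not $k+1$.

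The gap is the existence of an inverse move. You only assert it, and the mechanism you propose fails in one of its two cases. Write cells as (row, column), with $(a,b-1)\notin D$ and $(a,b)\in D$. Suppose the first row $r>a$ in which $(r,b-1),(r,b)$ are not both crossings has $(r,b-1)\notin D$ and $(r,b)\in D$. Reducedness gives no contradiction here. For example, $D=\{(1,2),(2,2)\}\in\text{PD}(1342)$ is reduced and has a gap in row $1$ at $b=2$. It admits no inverse ladder move out of $(1,2)$, yet no two of its pipes meet twice. So a rung-by-rung induction in which every missing crossing forces a double crossing cannot work. This configuration is excluded only by the extremal choice of row.

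To close the gap with your own candidate, let $a$ be the \emph{lowest} row containing an empty cell immediately followed by a crossing, and take any such $b$; the column choice is irrelevant.

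\begin{enumerate}
\item Every row $r>a$ is left-justified, say with $c_r$ crossings, so the configuration above cannot occur.
\item Let $k\ge 0$ be maximal with $c_r\ge b$ for $a<r\le a+k$. Then $c_{a+k+1}\le b-1$.
\item Suppose $c_{a+k+1}=b-1$. The pipe entering $(a+k+1,b-1)$ from below runs up column $b-1$, turns right at the elbow $(a,b-1)$, and passes horizontally through $(a,b)$. The pipe entering $(a+k+1,b-1)$ from the left turns up at the elbow $(a+k+1,b)$ and runs up column $b$ through $(a,b)$. These two pipes cross at $(a+k+1,b-1)$ and again at $(a,b)$, contradicting reducedness.
\item Hence $c_{a+k+1}\le b-2$, both target cells are empty, and moving $(a,b)$ to $(a+k+1,b-1)$ is an inverse ladder move. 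No induction on $k$ is needed.
\end{enumerate}

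Separately, your leading-monomial remark is not an independent proof that a left-justified $D\in\text{PD}(w)$ equals $P_{\text{bott}}(w)$; the code computation is what does that.
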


A ladder move of order $0$ is also called a \textit{simple ladder move}, only moving a crossing up one square and right one square. For example, the pipe dream on the right in Example \ref{example:4132} is obtained from the bottom pipe dream of $4132$ by applying one simple ladder move. 

Finally, it is useful to define the \textit{northeast diagonal} of a crossing, which consists of all grid squares lying on a ray of slope $1$ emitting northeast from that crossing. So, simple ladder moves just slide crossings along their northeast diagonals.

We will often label these northeast diagonals $d_i$ from top to bottom, as depicted below: 

\begin{center}
\begin{ytableau}
    $1$ & $2$ & $3$ & $4$\\
    $2$ & $3$ & $4$ & $5$\\
    $3$ & $4$ & $5$ & $6$\\
    $4$ & $5$ & $6$ & $7$\\
\end{ytableau}
\end{center}

Similarly, we will generally read crossings from left to right along rows (so that the `first' crossing in a row is the leftmost one). 

\subsection{Pattern avoidance}

Let $w \in S_n$ be a permutation. We say a permutation $W \in S_N$, where $N$ is usually much larger than $n$, \textit{contains the pattern $w$} if there exist indices $i_1 < i_2 \dots < i_n$ such that $W_{i_1}, W_{i_2} \dots W_{i_n}$ are in the same relative order as $w_1, w_2 \dots w_n$. 

One of the patterns listed in Theorem \ref{thm:patternavoidance} is $1432$, and its role in the proof is different from the other patterns listed. It arises because of the following important theorem of Gao \cite{principalspecializations}:
\vspace{1em}

\begin{theorem}\label{thm:1432simple}
    A permutation $w$ avoids the pattern $1432$ if and only if every reduced pipe dream for $w$ can be obtained from $P_{\text{bott}}(w)$ via a sequence of \textit{simple} ladder moves.
\end{theorem}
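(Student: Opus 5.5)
The plan is to reduce the statement to a local question about ladder moves, using an invariant of simple ladder moves. A crossing at $(i,j)$ lies on the northeast diagonal $d_{i+j-1}$. A simple ladder move slides a crossing along its diagonal, so it preserves the multiset $\delta(D)$ of diagonal indices of the crossings of $D$. A ladder move of order $k$ sends $(i,j)$ to $(i-k-1,j+1)$, which changes the diagonal index from $i+j-1$ to $i+j-1-k$. Hence every pipe dream reachable from $P_{\text{bott}}(w)$ by simple moves satisfies $\delta(D)=\delta(P_{\text{bott}}(w))$. Conversely, suppose some $D\in\text{PD}(w)$ reachable by simple moves admits a ladder move of order $k\ge 1$. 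The result $D'$ has strictly smaller total diagonal sum, so $D'$ is not reachable by simple moves. Combining this with Theorem~\ref{thm:laddermove} (every $D$ is reached by \emph{some} sequence of ladder moves, and the first non-simple move in such a sequence is applied to a simply-reachable pipe dream), the statement becomes equivalent to the following:

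\emph{$w$ avoids $1432$ if and only if no pipe dream reachable from $P_{\text{bott}}(w)$ by simple ladder moves admits a ladder move of order $\ge 1$.}

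\emph{Ladder forces $1432$.} Suppose a ladder move of order $k\ge1$ applies at $(i,j)$ in $D$. Then the columns $i-k,\dots,i-1$ of rows $j,j+1$ are all crossings, $(i,j)$ is a crossing, and $(i,j+1)$, $(i-k-1,j)$, $(i-k-1,j+1)$ are elbows. I would trace the pipes through this $2\times(k+2)$ window. Two pipes enter through the elbows at column $i-k-1$, and they traverse the full ladder horizontally in rows $j$ and $j+1$. A third pipe goes vertically through column $i$ and is crossed by the row-$j$ pipe at $(i,j)$. At least one further pipe goes vertically through a ladder column $i'<i$ and crosses both horizontal pipes; such a pipe exists because $k\ge1$. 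Reducedness (each pair of pipes crosses at most once) then pins down the relative order of the sources and targets of these four pipes. The aim is to read off $w(a)<w(d)<w(c)<w(b)$ at positions $a<b<c<d$. I expect the vertical pipe through column $i$ to play the role of the "1", and the two horizontal pipes together with the extra vertical pipe to form the descending "432". Doing this requires a careful case check of where each pipe exits.

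\emph{$1432$ yields a ladder.} Suppose $w$ contains $1432$ at positions $a<b<c<d$. I would first check the base case $w=1432$ directly. There, $P_{\text{bott}}$ has crossings $(1,2),(2,2),(1,3)$. Simple moves reach some pipe dreams, and one of them admits an order-$1$ ladder move, producing a pipe dream with a different diagonal multiset. For general $w$, I would choose the occurrence to be extremal, for example $a$ maximal and $d$ minimal, with $b,c$ chosen tightly. Then I would start from $P_{\text{bott}}(w)$ and use simple moves to push the crossings of the other pipes out of the relevant rows, so that the pipes of $a,b,c,d$ realize the $1432$ configuration locally.

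The main obstacle is this last embedding step: controlling the other strands so that an honest ladder of order $\ge1$ appears in a simply-reachable pipe dream. If a direct construction proves too messy, I would fall back on a cardinality argument. That would mean showing that $\#\text{PD}(w)$ exceeds the number of pipe dreams with diagonal multiset $\delta(P_{\text{bott}}(w))$, by exhibiting one reduced pipe dream of $w$ whose diagonal multiset differs from that of $P_{\text{bott}}(w)$. Such a pipe dream can be built by splicing the extra $1432$ pipe dream into the bottom pipe dream, after using the extremal choice of the occurrence to isolate those four pipes.
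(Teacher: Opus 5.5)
The paper does not prove this statement. It is quoted from Gao \cite{principalspecializations}, so your argument has to stand on its own.

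Your reduction is correct. Simple moves fix the multiset of diagonals, and an order-$k$ move lowers the diagonal sum by $k$. With Theorem~\ref{thm:laddermove}, the claim becomes ``$w$ contains $1432$ iff some simply-reachable pipe dream admits a ladder move of order $\ge 1$''. But both directions of this reformulation are only planned.

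The plan for ``ladder forces $1432$'' miscounts pipes. In your window, the pipe running vertically through $(i,j)$ is not a new pipe. It is the one that ran along row $j+1$ and turned north at the elbow $(i,j+1)$. So the window contains just two kinds of pipe:
the two rails, which cross each other at the moving crossing, and the rung pipes, each of which crosses both rails.
Reducedness forces a $321$ and nothing more. The pipe you nominate as the ``1'' is the middle letter of that $321$.

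The ``1'' is genuinely non-local. Use the standard (row, column) convention here; the paper's sentence making $j$ the row is a slip, which is why your ladder came out transposed. Then $w=13542$ has the simply-reachable pipe dream $\{(1,2),(3,1),(3,2),(4,1)\}$, with an order-$1$ ladder at $(4,1)$. The $321$ is carried by pipes $3,4,5$. The pipe bumping at the top elbow $(2,1)$ is pipe $2$, with $w(2)=3>w(5)=2$. The ``1'' is pipe $1$, which never enters the window.

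A counting argument repairs this direction. Take an order-$k$ ladder at $(r,c)$ with elbows at $(r-k-1,c)$ and $(r-k-1,c+1)$. The rectangle of rows $1,\dots,r-k-1$ and columns $1,\dots,c$ has exactly $r-k-1$ exits on its right side. One of them is used by the pipe climbing column $c$, which entered the rectangle from below. By pigeonhole, some pipe starting in a row $\le r-k-1$ leaves through the top at a column $\le c$. The rung pipes start in rows $\ge r-k$, and the pipe climbing column $c$ ends in a column $\ge c+1$. So this pipe precedes the $321$ in position and lies below it in value, giving the $1432$.

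The converse, ``$1432$ yields a ladder'', is the substantive half, and here the proposal has no argument. Using simple moves to clear the other crossings out of the way is exactly what has to be proved. Splicing a $1432$ pipe dream into $P_{\text{bott}}(w)$ is not a defined operation: other pipes may cross the four chosen ones. Nothing guarantees the result is a reduced pipe dream of $w$, let alone one with a new diagonal multiset.

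The example above already shows the issue. $P_{\text{bott}}(13542)$ admits no ladder move of order $\ge 1$, and one appears only after the simple move $(2,1)\to(1,2)$. A genuine construction controlling the remaining strands is still missing; that is what Gao's theorem supplies.
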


That is, any pipe dream for $w$ can be obtained by sliding crossings along their starting diagonals.

With these preliminaries out of the way, we may now begin our proof of Theorem \ref{thm:patternavoidance}. 

\section{An injection from labelings to pipe dreams}

\subsection{Overarching proof strategy}

In order to prove Theorem \ref{thm:patternavoidance}, our strategy is as follows. Let $F$ and $w$ be a binary forest and a permutation respectively, with $\text{code}(F) = \text{code}(w)$. First, we will describe a weight-preserving injection $\psi_w$ from valid labelings of $F$ to (reduced) pipe dreams of $w$. 

Then, to show one direction of Theorem \ref{thm:patternavoidance}, we essentially show that if $\psi_w$ is not a surjection for a permutation $w$, then $\psi_{w'}$ is not a surjection for any permutation $w'$ containing $w$ as a pattern (Lemma \ref{lemma:patterncontainment}). Given this pattern containment fact, it suffices to manually check that the forbidden patterns listed do not yield forest polynomials. 

Finally, we prove the opposite direction: that the patterns appearing in Theorem \ref{thm:patternavoidance} are the only possible obstructions to the bijectivity of $\psi_w$. This direction is somewhat more onerous and needs more casework. 

\subsection{The injection $\psi_w$}

Throughout the rest of this section, we will let $F$ and $w$ be a binary forest and a permutation \textit{with the same code} (following Remark \ref{rmk:easyrmk}). 
\vspace{1em}

\begin{definition}[Corresponding crossings and vertices]\label{def:correspondence}

Mark the crossings of row $i$ of $P_{\text{bott}}(w)$ with $1, 2 \dots L_w(i)$ from left to right, and mark the (interior) vertices in left branch $i$ of $F$ with $1, 2 \dots L_F(i)$ from bottom to top. We say that crossing $j$ in row $i$ of $P_{\text{bott}}(w)$ \textit{corresponds} to vertex marked $j$ in left branch marked $i$ of $F$. 
\end{definition}

If $v \in T$ is an interior vertex, we will use the notation $c_v$ to denote the crossing in $P_{\text{bott}}(w)$ corresponding to $v$. 
\vspace{1em}

\begin{example}
    Let $w = 4132$. Then, $P_{\text{bott}}(w)$ is:


\begin{center}
    \begin{tikzpicture}[scale=0.6]
    \GRIDwitnum{4}
    \textcolor{red}{\cross{0}{0}}
    \textcolor{orange}{\cross{1}{0}}
    \textcolor{violet}{\cross{2}{0}}

    \arcs{0}{1}
    \arcs{1}{1}
    \textcolor{cyan}{\cross{0}{2}}
\end{tikzpicture}
\end{center}

  In order, let us label the crossings as $c_{v_1}, c_{v_2}, c_{v_3}$ (in the first row), and $c_{v_4}$ (in the third row). These correspond to vertices $v_1, v_2, v_3, v_4$ as labeled in the forest.

\begin{center}
\begin{tikzpicture}[x=0.8cm,y=0.7cm,baseline=(base)]
  \coordinate (base) at (1,0);
  \draw (1,0) -- (5,0);
  \foreach \i in {1,...,5}{
    \node[below=2pt] at (\i,0) {\small $\i$};
    \draw[thick] (\i-0.1,0.1) -- (\i+0.1,-0.1);
    \draw[thick] (\i-0.1,-0.1) -- (\i+0.1,0.1);
  }

  \draw[thick]
    (1,0) -- (1.5,0.5) -- (2,0)
    (1.5,0.5) -- (2.5,1.5) -- (3.5,0.5)
    (3,0) -- (3.5,0.5) -- (4,0)
    (2.5,1.5) -- (3, 2) -- (5, 0);

  \fill[orange] (2.5,1.5) circle (1.5pt);
  \fill[cyan] (3.5,0.5) circle (1.5pt);
  \fill[red] (1.5,0.5)  circle (1.5pt);
  \fill[violet] (3,2) circle (1.5pt);

  \node at (2.5,2) {$v_2$};
  \node at (1.5,1)  {$v_1$};
  \node at (3.5,1) {$v_4$};
  \node at (3,2.5) {$v_3$};

\end{tikzpicture}   
\end{center}

\end{example}

Using definition \ref{def:correspondence}, we can describe a map from valid labelings of $F$ to pipe dreams of $w$: 
\vspace{1em}

\begin{definition}\label{def:injectionfromlabelings}
    Let $f_F$ be a valid labeling of $F$. Then, $\psi_w(f_F)$ is the pipe dream for $w$ obtained from $P_{\text{bott}}(w)$ by sliding $c_v$ up to row $f_F(v)$ via \textit{simple ladder moves} for each $v \in \text{IN}(F)$. 
\end{definition}

We will prove shortly that $\psi_w(f_F)$ can always be defined for any $w$. 

\vspace{1em}
\begin{example}
    Let $f_F$ be the following labeling of $F$:

\begin{center}
\begin{tikzpicture}[x=0.8cm,y=0.7cm,baseline=(base)]
  \coordinate (base) at (1,0);
  \draw (1,0) -- (5,0);
  \foreach \i in {1,...,5}{
    \node[below=2pt] at (\i,0) {\small $\i$};
    \draw[thick] (\i-0.1,0.1) -- (\i+0.1,-0.1);
    \draw[thick] (\i-0.1,-0.1) -- (\i+0.1,0.1);
  }

  \draw[thick]
    (1,0) -- (1.5,0.5) -- (2,0)
    (1.5,0.5) -- (2.5,1.5) -- (3.5,0.5)
    (3,0) -- (3.5,0.5) -- (4,0)
    (2.5,1.5) -- (3, 2) -- (5, 0);

  \fill[orange] (2.5,1.5) circle (1.5pt);
  \fill[cyan] (3.5,0.5) circle (1.5pt);
  \fill[red] (1.5,0.5)  circle (1.5pt);
  \fill[violet] (3,2) circle (1.5pt);

  \node at (2.5,2) {$1$};
  \node at (1.5,1)  {$1$};
  \node at (3.5,1) {$2$};
  \node at (3,2.5) {$1$};

\end{tikzpicture} 
\end{center}


Which pipe dream for $4132$ should it correspond to? The crossings $c_{v_1}, c_{v_2}, c_{v_3}$ (red, orange, and purple) in row $1$ all remain in row $1$, but the crossing $c_{v_4}$ (light blue) in row $3$ should slide up to row $2$ via a simple ladder move:
\begin{center}

    \begin{tikzpicture}[scale=0.6]
    \GRIDwitnum{4}
    \textcolor{red}{\cross{0}{0}}
    \textcolor{orange}{\cross{1}{0}}
    \textcolor{violet}{\cross{2}{0}}

    \arcs{0}{1}
    \textcolor{cyan}{\cross{1}{1}}
    \arcs{0}{2}
\end{tikzpicture}
\end{center}

\end{example}

\begin{proposition}\label{prop:psiiswell-defined}
    The map $\psi_w$ can always be defined for any $w$. It is an injective map
    \[\{\text{valid labelings of } F\} \to \{\text{reduced pipe dreams for } w\}\]

    Furthermore, every pipe dream in its image can be obtained from $P_{\text{bott}}(w)$ via a sequence of simple ladder moves. 
\end{proposition}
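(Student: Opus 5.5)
The argument will turn on the shape of the bottom pipe dream and on the structure of the forest's left branches. In $P_{\text{bott}}(w)$, row $i$ is left-justified with crossings in columns $1,\dots,L(i)$. So crossing $j$ of row $i$ sits at $(j,i)$ (column $j$, row $i$) on diagonal $d_{i+j-1}$.

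Sliding this crossing to row $r\le i$ along its diagonal places it at column $j+(i-r)$. Thus $\psi_w(f_F)$ is determined as the set
\[
D_f=\{(j+i-f(v),\, f(v)) : v \text{ the } j\text{th vertex of left branch } i\}.
\]
Injectivity is then immediate. Distinct labelings move some crossing to a different row of the same diagonal. Provided $D_f$ has no collisions, the position of each crossing recovers $f(v)$, because each crossing stays on its own diagonal and the correspondence of Definition~\ref{def:correspondence} is fixed. The real content is therefore that $D_f$ is a reduced pipe dream for $w$ reachable by simple ladder moves.

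The key steps, in order, are as follows.

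First, I will translate validity into inequalities between crossings. In left branch $i$, vertex $j+1$ is the parent of vertex $j$ as a left child. Condition (ii) gives $f(v_{i,j})\ge f(v_{i,j+1})$: in a fixed original row, crossings further right go weakly higher. Condition (i) gives $f\le i$, so every crossing moves weakly up.

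Second, I will use the right-child condition (iii). The right child of $v_{i,j+1}$ is the top vertex of some left branch $i'>i$ (namely the branch starting at the next leaf after the left subtree), and it satisfies $f(v_{i',\mathrm{top}})>f(v_{i,j+1})$. I will check, using $\text{code}(F)=\text{code}(w)$ and the standard characterization of forest codes, that the row-$i'$ crossings occupy exactly the diagonals that block row $i$'s crossings from passing. Condition (iii) is then precisely the statement that a crossing never overtakes, or lands on, a crossing below it on the adjacent diagonal.

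Third, I will perform the moves in a specific order. I will process original rows from top to bottom, and within a row from right to left, sliding each crossing one step at a time up its diagonal. At each step I must verify the simple ladder condition: $(c,r)$ is occupied, while $(c+1,r-1)$, $(c,r-1)$ and $(c+1,r)$ are empty. I will verify this by induction, using the inequalities from the first two steps to show the target and the two neighboring squares are free. Theorem~\ref{thm:laddermove}'s framework then guarantees that each intermediate configuration is a reduced pipe dream for $w$, since ladder moves preserve the permutation and reducedness. This gives well-definedness and the final claim simultaneously.

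I expect the main obstacle to be the second step: identifying exactly which crossings can obstruct a simple ladder move, and matching them with the right-child relations in $F$. This requires a careful description of the left-justified bottom pipe dream in terms of the code. The relevant facts are that $L(i+1)\ge L(i)-1$ whenever row $i$'s crossings interact with row $i+1$, and how forest codes encode which branch is a right child of which. I would first try the case where the forest is a single tree, proving by induction on the number of interior vertices by removing the top root vertex (the last vertex of branch $1$), and then extend to disjoint trees, whose crossings lie in noninteracting regions.
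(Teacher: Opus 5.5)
Your skeleton matches the paper's proof: the same processing order (rows top to bottom, each row right to left), the same three-square check at every step, and the same treatment of the right neighbour via condition (ii). The gap is in Step 2, which aims at a false statement. Condition (iii) is \emph{not} ``precisely'' the non-overtaking condition, and the right child's branch does not sit exactly on the diagonals blocked by the parent's row. If it did, $\psi_w$ would hit every pipe dream reachable by simple ladder moves. Theorem~\ref{thm:1432simple} would then make every $1432$-avoiding $\mathfrak{S}_w$ a forest polynomial, which fails for $2413$. Concretely, take code $(2,0,3)$, so $w=316245$. The right child of $v_{1,2}$ is $v_{3,3}$, whose crossing starts at $(3,3)$ on diagonal $5$, while row $1$ occupies only diagonals $1,2$. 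That crossing slides $(3,3)\to(4,2)\to(5,1)$ unobstructed, even though (i) and (iii) force $f(v_{3,3})>f(v_{1,2})=1$. Your ``relevant fact'' $L(i+1)\ge L(i)-1$ is also false. Code $(3,1)$, i.e.\ $w=4213$, violates it, although row $2$'s crossing lies directly under row $1$'s. So the equivalence you plan to check cannot be checked.

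What you actually need is one-directional, and it must handle obstructing crossings that are not parents. For code $(3,0,3)$, the crossing of $v_{1,3}$ at $(3,1)$ lies on diagonal $3=d_{v_{3,2}}-1$ and would block $v_{3,2}$ from entering row $1$. Yet $v_{1,3}$ is neither the parent of $v_{3,2}$ nor the parent $v_{1,2}$ of the top $v_{3,3}$ of its branch. Ruling it out needs the chain $f(v_{1,3})\le f(v_{1,2})<f(v_{3,3})\le f(v_{3,2})$.

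The missing lemma is the paper's Claim in Case 2 (cf.\ Remark~\ref{rmk:diagonalremark}): if $u$ lies in an original row $i'<i$ on a diagonal $d_u\ge d_v-1$, then $v$ is a proper descendant of $u$ off its left chain. To see this, write $u=v_{i',k}$ with $k=d_u-i'+1$. Its subtree contains $v_{i',1},\dots,v_{i',k}$, so its leaves include $i',\dots,i'+k=d_u+1$. Since $i'<i\le d_v\le d_u+1$, the leaf $\rho(v)=i$ is among them. Nestedness of subtree spans, together with $\rho(v)>\rho(u)$, then puts $v$ strictly below $u$, and the path from $u$ down to $v$ must use a right edge. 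Hence (ii) and (iii) give $f(u)<f(v)$.

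Now suppose a processed crossing $u$ from an earlier row occupies one of $(c+1,r)$, $(c,r-1)$, $(c+1,r-1)$ while $c_v$ sits in row $r>f(v)$. Then $f(u)\ge r-1\ge f(v)$, a contradiction. The same-row case is your right-neighbour argument, and unprocessed crossings never reach these squares. With this lemma your Step 3 goes through, and no induction on trees is needed. For injectivity, also state that simple ladder moves never let two crossings on one diagonal pass each other. That is what lets you read $f$ off the unlabeled set $D_f$.
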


The fact that $\psi_w$ is injective is clear by construction; the claim that $\psi_w$ can always be defined takes more work. To prove it, we will need a couple more facts. 

\subsection{Adjacencies between crossings}

It will be important to be able to read off, from $P_{\text{bott}}(w)$, when two crossings correspond (under definition \ref{def:injectionfromlabelings}) to a pair of \textit{adjacent} vertices in $F$. 
\vspace{1em}

\begin{remark}
    It is easy to describe when two crossings $c_1, c_2$ in $P_{\text{bott}}(w)$ correspond to a pair $(v,w)$ such that $w$ is a \textit{left} child of $v$. This occurs exactly when $c_2$ is in the same row as $c_1$, and one space to the right of $c_1$. What is more subtle is describing when $w$ is a \textit{right} child of $v$. This relationship is captured in the following definition.  
\end{remark}
\vspace{1em}

\begin{definition}[Covering crossings]\label{definition:matching}
   We recursively define a certain relation between crossings of $P_{\text{bott}}(w)$ as follows. First, consider row $1$. Find the first nonempty row $j$ below row $1$; then, the $(j-1)$st crossing of row $1$ \textit{covers} the rightmost crossing of row $j$. (The first $(j-2)$ crossings of row $1$ do not cover anything). We say that the first $j-1$ crossings of row $1$ are all `completed.'  

   Recursively, apply the same procedure to row $j$. Only return to row $1$ when all crossings in row $j$ have been `completed.' 
   
    Then, when all crossings in row $1$ have been completed, find the next nonempty row $i$ with non-completed crossings, and start from the beginning, with $i$ playing the same role as $1$. If there is no such row $i$, we are done. 
\end{definition}
\vspace{1em}

\begin{example}
    Let's consider the bottom pipe dream for $w = 41532$, for which $L(w) = (3, 0, 2, 1, 0)$: 

    \begin{center}
            \begin{tikzpicture}[scale=0.6]
    \GRIDwitnum{6}
   \cross{0}{0}
   \textcolor{red}{\cross{1}{0}}
   \cross{2}{0}
   \arcs{3}{0}
   \arcs{4}{0}

    \arcs{0}{1}
    \arcs{1}{1}
    \arcs{2}{1}
    \arcs{3}{1}
    \textcolor{red}{\cross{0}{2}}
    \textcolor{cyan}{\cross{1}{2}}
    \arcs{2}{2}
    \textcolor{cyan}{\cross{0}{3}}
    \arcs{1}{3}
    \arcs{0}{4}
\end{tikzpicture}
\end{center}

The first crossing in row $1$ does not cover anything, because row $2$ is empty. The second crossing in row $1$ covers the second crossing in row $3$, so now we move to row $3$. The first crossing in row $3$ covers the first crossing in row $4$. The last crossing in row $1$ does not cover anything (there are no more crossings below left to cover) so we are done. 

So, there is a covering relationship between the red crossings and between the blue crossings; that is, crossing $2$ of row $1$ covers crossing $2$ of row $3$, crossing $1$ of row $3$ covers crossing $1$ of row $4$, and those are the only coverings. We encourage the reader to draw the corresponding binary forest and verify that Lemma \ref{lemma:crossingmatching} holds in this case. 
\vspace{1em}

\end{example}
The following fact becomes clear enough, following directly from Definition \ref{def:correspondence}, once one considers several examples:
\vspace{1em}
\begin{lemma}\label{lemma:crossingmatching}
    Crossing $c_1$ covers crossing $c_2$, according to the definition above, if and only if $c_2$ is the right child of $c_1$. 
\end{lemma}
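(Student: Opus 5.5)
The plan is to make explicit the recursive structure of the unique binary indexed forest $F$ with $\text{code}(F)=L$. We then check that Definition \ref{definition:matching} runs exactly the same recursion on the rows of $P_{\text{bott}}(w)$. By Definition \ref{def:correspondence}, crossing $t$ of row $i$ corresponds to the $t$-th vertex from the bottom of left branch $i$. So the lemma reduces to identifying, for each such vertex, its right child in terms of the code alone.

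\textbf{Step 1 (a recursive description of $F$).} For each $i$ with $L(i)>0$, let $T_i$ be the subtree rooted at the top vertex of left branch $i$. I would show that its leaves form an interval $[i,m(i)]$, computed by the following procedure. Start with $p=i$. For $t=1,\dots,L(i)$ in turn, set $q=p+1$. If $L(q)=0$, the right child of the $t$-th vertex of branch $i$ is the leaf $q$, and we set $p=q$. Otherwise the right child is the top of left branch $q$, and we set $p=m(q)$. At the end, $m(i)=p$; if $L(i)=0$, put $m(i)=i$.

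To justify this, I would build the forest by exactly this recursion, taking the leftmost unused index as the start of each new tree. The result is a binary indexed forest in which left branch $i$ has $L(i)$ interior vertices, since $\rho$ sends a vertex to the bottom of its left branch. Hence its code is $L$. By uniqueness of the forest with a given code (the bijection quoted after Definition \ref{def:lehmercodeforests}), this forest is $F$.

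\textbf{Step 2 (matching with the covering procedure).} Next I would compare this recursion, by induction on $L(i)$ and on the nesting depth, with Definition \ref{definition:matching} started at row $i$. Crossings of row $i$ are processed left to right, and the indices $q$ with $L(q)=0$ correspond to empty rows. So in the recursion, the first vertices of branch $i$ take the successive leaves $i+1,\dots,j-1$ as right children. Here $j$ is the first nonempty row below $i$. Then vertex $j-i$ has right child the top of branch $j$. In the covering procedure, the same crossing covers the rightmost crossing of row $j$, and the rightmost crossing of row $j$ corresponds to the top vertex of branch $j$.

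The instruction to return to row $i$ only after all crossings of row $j$ are completed matches jumping $p$ to $m(j)$. By the inductive hypothesis, completing row $j$ processes exactly the nonempty rows in $(j,m(j)]$. After that, ``the next nonempty row below'' in the covering procedure is the next nonempty row after $m(j)$, which is precisely what the forest recursion uses. Finally, the restart clause (pick the next row with non-completed crossings) corresponds to starting a new tree at the leftmost index not yet absorbed.

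\textbf{Main obstacle.} The delicate point is making the phrase ``first nonempty row $j$ below'' precise inside the recursion. It must mean the first nonempty row after the rows already consumed by earlier subtrees, and the number of empty rows skipped determines the offset $j-1$ (in general, $j-p-1$ relative to the current position $p$).

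I would handle this by stating a single inductive claim. If the covering procedure is run starting at row $i$ (with the rows before $i$ completed), then it completes exactly the crossings in the nonempty rows of $[i,m(i)]$. Moreover, it produces exactly the pairs (vertex, right child) inside $T_i$. With this claim, the index bookkeeping becomes a direct comparison of the two recursions, and both directions of the ``if and only if'' follow at once.
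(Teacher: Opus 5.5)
Your proposal is correct, and it supplies an argument where the paper gives none: the paper only asserts that the lemma follows directly from Definition~\ref{def:correspondence} and becomes clear after a few examples. Your route is the natural way to make that assertion rigorous. You describe $F$ by the pointer recursion: the right child of the $t$-th vertex of branch $i$ has leftmost leaf $p+1$, where $[i,p]$ is the leaf set of its left child, and that right child is the leaf $p+1$ exactly when $L(p+1)=0$. You then check that Definition~\ref{definition:matching}, run with the same pointer, is literally the same recursion.

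What this buys is a precise reading of the covering procedure, and the precision is not cosmetic. When the procedure returns to a row, the empty rows already used as right leaves inside the completed subtree must count as consumed. For instance, take $w=42153$, with code $(3,1,0,1,0)$. The single crossing of row $2$ has the leaf $3$ as its right child, so it is the \emph{second} crossing of row $1$ that covers the crossing of row $4$. Counting from row numbers instead would give the third: either taking the ``$(j-1)$st crossing'' with $j=4$, or skipping one crossing for the empty row $3$.

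Three small things to tidy up.
\begin{enumerate}
\item With current position $p$ and next nonempty row $j$, the covering crossing is the $(j-p)$-th of the remaining crossings, and $j-p-1$ crossings are skipped. At $p=1$ this recovers the paper's $(j-1)$st crossing, so your two parenthetical offsets are off by one from each other.
\item Your Step 2 narrative should explicitly include the case where a row runs out of crossings before reaching the next nonempty row. Then nothing is covered, and the pointer is simply handed back to the parent row.
\item In Step 1 you do not need uniqueness of the forest with a given code. A right child with leftmost leaf $q$ is the leaf $q$ if $L(q)=0$ and the top of branch $q$ otherwise. That observation already forces every forest with code $L$ to obey your recursion.
\end{enumerate}
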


Using Lemma \ref{lemma:crossingmatching}, we may prove Proposition \ref{prop:psiiswell-defined}:

\begin{proof}
We want to show that $\psi_w(f_F)$ can be defined for any valid labeling $f_F$ of $F$.  
    That is, if $f_F$ is a valid labeling for $F$, it must be possible possible, for each interior vertex $v$ of $F$, to slide the corresponding crossing $c_v \in P_{\text{bott}}(w)$ up to row $f_F(v)$ using only simple ladder moves. Informally, as $c_v$ slides up and to the right along its diagonal, its path should never be obstructed from making more simple ladder moves by another crossing. 

    Reading crossings of $P_{\text{bott}}(w)$ from right to left and top to bottom, we try to slide each crossing $c_v$ up to row $f_F(v)$ in that order. For contradiction, consider the first time at which a crossing $c_v$ cannot slide up into its desired row. There are two possible reasons why $c_v$ might be obstructed from making one more simple ladder move: there could be another crossing directly to its right, or another crossing directly above it (or directly above it and one square to its right). For example, in each diagram below, the bottom left crossing `wants' to slide up to the higher row, but is obstructed either by a crossing to its right, or a crossing directly above it.

\begin{centering}
    \includegraphics[scale=1.5]{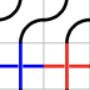}
    \hspace{8mm} 
    \includegraphics[scale=1.5]{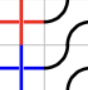}

\end{centering}

\textbf{Case 1:}
    
    Assume that the first obstruction occurs: $c_v$ is blocked from taking another step by a $c_w$ directly to its right. Then, it is straightforward to check that $c_w$ and $c_v$ must have started out as neighbors in the same row. So, according to Definition \ref{def:correspondence}, $w$ is a left child of $v$ in $F$. But then, $f_F(w) \geq f_F(v)$ by the definition of valid labelings. Since $c_w$ is already assumed to be in its desired row $f_F(w)$, $c_v$ is also already in its desired row $f_F(v)$. 

\textbf{Case 2:}

    Now, we consider what happens if there is a second crossing $c_w$ directly above (or directly above and to the right of) $c_v$ that obstructs it from making another simple ladder move. 
    
    \begin{claim} The vertex $v$ is in the right subtree of a descendant of the vertex $w$. 
    \end{claim}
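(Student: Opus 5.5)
Since we process crossings from right to left and top to bottom, the obstructing crossing $c_w$ has already been slid (or is stationary) when $c_v$ is blocked. I will show that the column/diagonal position of $c_w$ at the moment of obstruction forces a specific relation in $F$, using Lemma \ref{lemma:crossingmatching} and Definition \ref{def:correspondence}.

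\textbf{Locating $c_w$ in $P_{\text{bott}}(w)$.} Let $c_v$ start at row $i$, column $a$, on diagonal $d=i+a-1$. Suppose $c_v$ is currently at row $r<i$ and is blocked by $c_w$ at $(r-1,\cdot)$ directly above, or above and one to the right. Then $c_w$ lies on diagonal $d$ or $d+1$ in row $r-1$. Since $c_w$ has only moved along its own diagonal by simple ladder moves, it started in some row $i' \ge r-1$ on diagonal $d$ or $d+1$. It cannot have started on diagonal $d$ below $c_v$, since distinct crossings of $P_{\text{bott}}(w)$ sharing a diagonal would have to cross a path, and $c_w$ is processed first. So it started in a row $i'<i$, with column $a'\ge a$ and in fact $a' \geq a+(i-i')-1$.

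\textbf{Translating to the forest.} The key point is that $P_{\text{bott}}(w)$ is left-justified. This means every intermediate row $i'<k<i$ that contains crossings reaching far enough right is nested inside the covering recursion of Definition \ref{definition:matching}. I will argue that when the recursion at row $i'$ reaches crossing $c_w$, the crossing rows below it, up to and including row $i$, have not yet been completed. Indeed, row $i'$ has at least $a+(i-i')-1$ crossings, and $c_w$ is among them. The recursion therefore descends from some crossing $u$ of row $i'$ weakly left of $c_w$, to a row between $i'$ and $i$, and eventually covers crossings in row $i$ including the left branch containing $c_v$.

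By Lemma \ref{lemma:crossingmatching}, this makes $c_v$'s left branch lie in the right subtree of $u$. By the left-child remark, $u$ is a left descendant of $w$ along left branch $i'$, or equals $w$. Hence $v$ is in the right subtree of a descendant of $w$.

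\textbf{Main obstacle.} The hardest step is making the counting in the recursion rigorous. I need to show that the covering recursion starting at row $i'$ enters row $i$ before it exhausts the crossings of row $i'$ up to position $a'$. Equivalently, I need $\sum_{i'<k\le i}$ (number of crossings consumed) $\le a'-1$. I would try induction on $i-i'$, using left-justification and the inequality $a'\ge a+(i-i')-1$: each nonempty intermediate row consumes exactly one covering crossing from its parent row, plus whatever its own recursion consumes. I would also check the case where $c_w$ is directly to the upper-right separately, since this is where the extra $-1$ in the inequality for $a'$ matters.
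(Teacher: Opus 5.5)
\textbf{The gap: your ``main obstacle'' is the whole claim, and it is left unproved.} Your route is the same as the paper's: place $c_w$ in $P_{\text{bott}}(w)$ in a row $i'<i$ on a neighbouring diagonal, then use the covering structure to show that row $i$ falls under $w$. However, the step you flag as the main obstacle is the entire content of the claim, and you leave it as an unexecuted induction. As written the proposal is therefore incomplete.

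\textbf{The missing observation.} It takes one line and needs no induction:
\begin{enumerate}
\item The vertex $w$ is the $a'$-th vertex from the bottom of left branch $i'$. So its subtree contains at least $a'$ interior vertices, and hence the consecutive leaves $i',i'+1,\dots,i'+a'$. In the language of Definition~\ref{definition:matching}: each crossing of row $i'$ covers either a leaf (an empty row) or a whole nonempty-row subtree, so each crossing advances the leaf pointer by at least one.
\item Since $a'\ge a+(i-i')-1\ge i-i'$, leaf $i$ lies in the subtree of $w$.
\item Both $v$ and $w$ are ancestors of leaf $i$ (for $v$ because $\rho_F(v)=i$), so they are comparable.
\item $w$ cannot lie weakly below $v$, because $\rho_F(w)=i'<i$. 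So $v$ is a proper descendant of $w$.
\item The path from $w$ down to $v$ must contain a right edge, because $\rho_F$ changes along it.
\end{enumerate}
This is exactly the claim. It is also the fact the paper invokes when it asserts that the rows just beneath $c_w$ are covered by $c_w$ or its right descendants.

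\textbf{Two smaller corrections.}
\begin{itemize}
\item The square directly above $c_v$ lies on diagonal $d-1$, and the square above and to the right lies on diagonal $d$. They are not on $d$ and $d+1$. Your inequality $a'\ge a+(i-i')-1$ is the correct one for the true diagonals. The $-1$ is harmless since $a\ge 1$, so no separate case is needed.
\item The reason $c_w$ started in a row $i'<i$ is the processing order, not a path-crossing argument. Crossings starting below row $i$, or to the left of $c_v$ in row $i$, have not yet moved, so they cannot sit in row $r-1<i$. Crossings to the right of $c_v$ in row $i$ lie on diagonals greater than $d$, so they cannot block $c_v$.
\end{itemize}
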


    Consider the bottom pipe dream $P_{\text{bott}}(w)$. In $P_{\text{bott}}$, say the row containing $c_w$ has $l$ crossings. Consider the $l$ rows beneath this row. It follows from the recursive definition of covering crossings (Definition \ref{definition:matching}) and Lemma \ref{lemma:crossingmatching} that every crossing these $l$ rows will be covered either by $c_w$, or one of its descendants in its \textit{right} subtree. 

    If $c_v$ ever slides directly underneath $c_w$ (after applying some sequence of simple ladder moves to both crossings), then the diagonal of $c_v$ is one lower than the diagonal of $c_w$. (Similarly, if $c_w$ is one square above and one square to the right of $c_v$ after some sequence of simple ladder moves, then $c_w$ and $c_v$ share a diagonal). In both of these cases, in $P_{\text{bott}}(w)$, $c_v$ must be in one of the $l$ rows beneath $c_w$.  
    So, $v$ must in the right subtree of $w$. Now, since $v$ is a right descendant of $w$, we must have $f_F(v) > f_F(w)$. But since $c_w$ was already assumed to be slid into row $f_F(w)$, we do not need to slide $c_v$ any higher. 

    To quickly summarize the argument, if a crossing $c_v$ is obstructed by any other crossing $c_w$ as it slides up to row $f_F(v)$, then the obstructing crossing $c_w$ must satisfy $f_F(w) \leq f_F(v)$ (with strict inequality if $c_w$ is above $c_v$). But this means that we have already slid $c_v$ up to $f_F(v)$.
\end{proof}

\begin{remark}\label{rmk:diagonalremark}
Suppose we have crossings $c_w,c_v$ corresponding to vertices $w,v$. Then if $c_w$ lies in the diagonal $d_v-1$ or greater in $P_{\text{bott}}(w)$ (equivalently $c_v$ lies in the diagonal $d_w+1$ or smaller), $v$ is in the subtree of $w$. 
\end{remark}
\begin{proof}
If $c_w$ has only empty rows following it, then the subtree of $w$ is comprised of all rows which intersect diagonal $d_w+1$. Nonempty rows can only increase the number of rows which are contained in the subtree of $w$, so any such $v$ as described in the remark must be in the subtree of $w$.
\end{proof}

\section{Necessary direction}
In this section, we show that if a permutation $w$ contains any of the permutations $\{1432, 2413, 2431, 14523, 32154, 341265\}$, then its corresponding Schubert polynomial $\mathfrak{S}_w$ is not a forest polynomial.

\subsection{The forbidden patterns themselves}

Although it is easy to show by hand that $\mathfrak{S}_w$ is not a forest polynomial for each forbidden $w$, it is useful to understand what is `bad' about these patterns for our proof of the necessary direction. Below are the bottom pipe dreams of the six forbidden patterns:

\begin{multicols}{6}
\begin{center}
    \[\begin{tikzpicture}[scale=0.4]
    \GRIDwitnum{4}
    \arcs{0}{0}
    \arcs{1}{0}
    \arcs{2}{0}
    \cross{0}{1}
    \cross{1}{1}
    \cross{0}{2}
\end{tikzpicture}
    \]
    $w=1432$
\end{center}
    \columnbreak
\begin{center}
    \[
    \begin{tikzpicture}[scale=0.4]
    \GRIDwitnum{4}
    \textcolor{red}{\cross{0}{0}}
    \arcs{1}{0}
    \arcs{2}{0}
    \cross{0}{1}
    \textcolor{blue}{\cross{1}{1}}
    \arcs{0}{2}
\end{tikzpicture}
    \]
    $w=2413$ \\
    $c_w$: row 1, crossing 1 \\ 
    $c_v$: row 2, crossing 2 \\ 
\end{center}
    \columnbreak

\begin{center}
    \[\begin{tikzpicture}[scale=0.4]
    \GRIDwitnum{4}
    \textcolor{red}{\cross{0}{0}}
    \arcs{1}{0}
    \arcs{2}{0}
    \cross{0}{1}
    \textcolor{blue}{\cross{1}{1}}
    \cross{0}{2}
\end{tikzpicture}
    \]
    $w=2431$ \\
    $c_w$: row 1, crossing 1 \\ 
    $c_v$: row 2, crossing 2 \\ 
\end{center}
    \columnbreak
\begin{center}
    \[\begin{tikzpicture}[scale=0.4]
    \GRIDwitnum{5}
    \arcs{0}{0}
    \arcs{1}{0}
    \arcs{2}{0}
    \arcs{3}{0}
    \textcolor{red}{\cross{0}{1}}
    \cross{1}{1}
    \arcs{2}{1}
    \cross{0}{2}
    \textcolor{blue}{\cross{1}{2}}
    \arcs{0}{3}
\end{tikzpicture}
    \]
    $w=14523$ \\
    $c_w$: row 2, crossing 1 \\ 
    $c_v$: row 3, crossing 2 \\ 
\end{center}
    
    \columnbreak
\begin{center}
    \[\begin{tikzpicture}[scale=0.4]
    \GRIDwitnum{5}
    \cross{0}{0}
    \textcolor{red}{\cross{1}{0}}
    \arcs{2}{0}
    \arcs{3}{0}
    \cross{0}{1}
    \arcs{1}{1}
    \arcs{2}{1}
    \arcs{0}{2}
    \arcs{1}{2}
    \textcolor{blue}{\cross{0}{3}}
\end{tikzpicture}
    \]
    $w=32154$ \\
    $c_w$: row 1, crossing 2 \\ 
    $c_v$: row 4, crossing 1 \\ 
\end{center}
    \columnbreak
\begin{center}
    \[\begin{tikzpicture}[scale=0.4]
    \GRIDwitnum{6}
    \cross{0}{0}
    \textcolor{red}{\cross{1}{0}}
    \arcs{2}{0}
    \arcs{3}{0}
    \arcs{4}{0}
    \cross{0}{1}
    \cross{1}{1}
    \arcs{2}{1}
    \arcs{3}{1}
    \arcs{0}{2}
    \arcs{1}{2}
    \arcs{2}{2}
    \arcs{0}{3}
    \arcs{1}{3}
    \textcolor{blue}{\cross{0}{4}}
\end{tikzpicture}
    \]
    $w=341265$ \\
    $c_w$: row 1, crossing 2 \\ 
    $c_v$: row 5, crossing 1 \\ 
\end{center}

\end{multicols}

In each of these pipe dreams (except for $1432$), we highlight a red crossing $c_w$ and a blue crossing $c_v$. The pair $(c_w, c_v)$ always satisfies two properties: 

\begin{enumerate}
    \item The corresponding vertex $v$ is the \textit{right} child of $w$ and 
    \item It is possible to slide $c_v$, following some sequence of simple ladder moves, upwards so that $c_v$ and $c_w$ are in the same row. 
\end{enumerate}

These two observations explain why each of these patterns is forbidden: in the forest $F$, we must have $f_F(v) > f_F(w)$, but there is a pipe dream for $w$ such that $\text{row}(v) \leq \text{row}(w)$. That is, our injection $\psi_w$ cannot be a surjection. Our general strategy in this direction will be to prove the existence of such a `bad' parent-right child pair in $P_{\text{bott}}(w)$:
\vspace{1em}

\begin{definition}
    A pair of crossings $(c_w, c_v) \in P_{\text{bott}}(w)$ is called a bad parent-child pair if:
    
    \begin{enumerate}
        \item $c_w$ corresponds to the right child of $c_v$, but
        \item After some sequence of simple ladder moves (applied to $c_w$ or other crossings), we can obtain a pipe dream in which $c_w$ is in the same row as $c_v$. 
    \end{enumerate}
\end{definition}

As explained above, if there exists a bad parent-child pair in $P_{\text{bott}}(w)$, then $\mathfrak{S}_w$ cannot be a forest polynomial. In fact, using Proposition \ref{prop:psiiswell-defined}, we have:
\vspace{1em}

\begin{remark}\label{rmk:usefulremark}
    Suppose $w$ avoids $1432$. Then, $\mathfrak{S}_w \neq \mathfrak{F}_F$ if and only if there exists a `bad' parent-right child pair as above. 
\end{remark}

This remark will also be helpful for the sufficient direction. 

\subsection{Pattern avoidance and $\psi_w$}

First, we treat the pattern $1432$ separately:
\vspace{1em}

\begin{lemma}
    If $w$ contains $\pi = 1432$, $\mathfrak{S}_w$ is not a forest polynomial.
\end{lemma}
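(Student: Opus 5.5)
The plan is to combine Gao's Theorem \ref{thm:1432simple} with Proposition \ref{prop:psiiswell-defined}. Let $F$ be the unique forest with $\text{code}(F)=\text{code}(w)$. By Remark \ref{rmk:easyrmk}, if $\mathfrak{S}_w$ is a forest polynomial at all, it must equal $\mathfrak{F}_F$. So it suffices to show $\mathfrak{S}_w\neq\mathfrak{F}_F$.

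First step: the polynomials as multiset sums. Since $\psi_w$ is a weight-preserving injection from valid labelings of $F$ into $\text{PD}(w)$, Theorem \ref{thm:mainpipedreamthm} gives
\[\mathfrak{S}_w-\mathfrak{F}_F=\sum_{D\in \text{PD}(w)\setminus \text{im}(\psi_w)}\text{wt}(D).\]
This is a sum of monomials with nonnegative coefficients. It therefore vanishes if and only if $\psi_w$ is surjective. Hence $\mathfrak{S}_w=\mathfrak{F}_F$ if and only if $\psi_w$ is a bijection.

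Second step: produce a pipe dream outside the image. Suppose $w$ contains $1432$. By Theorem \ref{thm:1432simple}, there is a reduced pipe dream $D\in\text{PD}(w)$ that cannot be obtained from $P_{\text{bott}}(w)$ by simple ladder moves alone. By the last clause of Proposition \ref{prop:psiiswell-defined}, every pipe dream in the image of $\psi_w$ is obtainable from $P_{\text{bott}}(w)$ by simple ladder moves. So $D\notin \text{im}(\psi_w)$, $\psi_w$ is not surjective, and $\mathfrak{S}_w\neq\mathfrak{F}_F$. Since $\mathfrak{F}_F$ was the only candidate, $\mathfrak{S}_w$ is not a forest polynomial.

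The main obstacle is the direction of Gao's theorem that we need: containing $1432$ forces a pipe dream unreachable by simple moves. We are taking this as given from Theorem \ref{thm:1432simple}. The only other subtlety is the positivity argument in the first step, which rests on $\psi_w$ being injective and weight-preserving. Weight preservation holds because $c_v$ lands in row $f_F(v)$, so that crossing contributes $x_{f_F(v)}$.

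If one wanted to avoid relying on Gao's theorem in full, an alternative is to exhibit the offending pipe dream directly. One would take a $1432$ occurrence, look at the crossings it contributes to $P_{\text{bott}}(w)$, and apply a non-simple ladder move of order at least $1$. The difficulty is checking that the result cannot be re-expressed through simple moves, which is exactly the content of Gao's result.
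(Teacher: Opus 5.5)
Your proof is correct and is essentially the paper's argument: Proposition \ref{prop:psiiswell-defined} says the image of $\psi_w$ contains only pipe dreams reachable from $P_{\text{bott}}(w)$ by simple ladder moves, and Theorem \ref{thm:1432simple} supplies a pipe dream of $w$ that is not reachable this way, so $\psi_w$ is not surjective. Your identity $\mathfrak{S}_w-\mathfrak{F}_F=\sum_{D\in \text{PD}(w)\setminus \text{im}(\psi_w)}\text{wt}(D)$ is a bit more careful than the paper's claim that the monomials of $\mathfrak{S}_w$ form a ``strict superset'': the missing pipe dream could have the same weight as one in the image, and it is the coefficients, not necessarily the monomial support, that must differ.
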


\begin{proof}
    According to Proposition \ref{prop:psiiswell-defined}, the image of the injection $\psi$ contains only pipe dreams which can be obtained from $P_{\text{bott}}(w)$ via simple ladder moves. However, $w$ avoids $1432$ if and only if every reduced pipe dream for $w$ can be obtained from $P_{\text{bott}}(w)$ via a sequence of simple ladder moves. Therefore, if $w$ contains $1432$, the monomials appearing $\mathfrak{S}_w$ are a strict superset of the monomials appearing in $\mathfrak{F}_T$. 
\end{proof}

Now, let us consider only permutations which avoid $1432$. Our goal is to prove that if $\mathfrak{S}_{\pi}$ is not a forest polynomial for a pattern $\pi$ avoiding $1432$, then neither is $\mathfrak{S}_{w'}$, for any $w'$ containing $w$ as a pattern.
\vspace{1em}

\begin{definition}\label{def:insertion}
    Given some permutation $w \in S_n$, let us define \textit{insertion} at index $i$ of value $k$ as $w' = I_i^k(w)$, where we obtain $w' \in S_{n+1}$ by adding at index $i$ the value $k\in [n+1]$. Then, we increase by one all values $w(j)$ with $w(j) \ge k$. 
\end{definition}

For example, $I_2^4(1342) = 12453$. The following fact is standard:
\vspace{1em}

\begin{lemma}\label{lemma:insertion}
    Given a permutation $w$ containing a pattern $\pi$, we can obtain $w$ from $\pi$ by repeatedly performing insertions.
\end{lemma}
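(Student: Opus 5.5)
We argue by induction on $N-n$, where $w\in S_N$ contains the pattern $\pi\in S_n$. The goal is to show that $w$ arises from $\pi$ by $N-n$ successive single insertions $I_i^k$ in the sense of Definition \ref{def:insertion}, each producing a permutation that still contains $\pi$.

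If $N=n$, then $w=\pi$, since a pattern occurrence of length $N$ in $w\in S_N$ uses all indices and so reads off $w$ itself. Nothing needs to be done.

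For the inductive step, fix indices $i_1<\dots<i_n$ such that $w(i_1),\dots,w(i_n)$ are in the same relative order as $\pi$. Since $N>n$, some index $m\in[N]$ is not among the $i_j$.

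Let $u\in S_{N-1}$ be the standardization of $w$ with position $m$ deleted: remove the entry $w(m)$ and decrease by one every value exceeding $w(m)$. We check directly from Definition \ref{def:insertion} that $w=I_m^{w(m)}(u)$. Inserting the value $k=w(m)$ at index $m$ and raising every value of $u$ that is $\ge k$ exactly undoes the standardization. This is the one point needing care: values of $u$ that are $\ge w(m)$ are precisely the images of values of $w$ that are $>w(m)$, so the shift is consistent.

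Next, $u$ still contains $\pi$. Deleting an entry and standardizing preserves the relative order of the remaining entries, and the occurrence $i_1<\dots<i_n$ avoided $m$. So the shifted indices (each $i_j$ decreased by one if $i_j>m$) give an occurrence of $\pi$ in $u$.

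By the inductive hypothesis, $u$ is obtained from $\pi$ by $N-1-n$ insertions. One further insertion, $I_m^{w(m)}$, then yields $w$, which completes the induction.

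None of these steps is difficult. The only thing to verify is the index and value bookkeeping in the claim $w=I_m^{w(m)}(u)$.
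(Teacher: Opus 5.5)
Your proof is correct and complete. The paper gives no argument for this lemma and simply calls the fact standard; your induction is the standard proof: delete an entry outside a fixed occurrence of $\pi$, standardize, and check $w=I_m^{w(m)}(u)$. Your reading of $I_i^k$ is also the convention the paper intends: the new value occupies position $i$, and the old values $\ge k$ shift up. The paper's own example confirms this, since $12453=I_2^2(1342)$ under that reading; the paper prints it as $I_2^4$, which is inconsistent with its definition.
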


    Inserting values has a simple effect on $\text{code}(w)$. Namely, the bottom pipe dream of $I_i^k(w)$ is obtained from $P_{\text{bott}}(w)$ by inserting a new (possibly empty) row of crossings right before row $i$, and then, for all $j < i$, adding one more crossing to row $j$ if $w(j) > w(i)$. For example, $L(1342) = (0,1,1,0)$, and $L(12453) = (0,0, 1, 1, 0)$. 

\vspace{1em}
\begin{lemma}\label{lemma:patterncontainment}
    If a permutation $\pi$ satisfies $\mathfrak{S}_\pi \neq \mathfrak{F}_T$, then any permutation $w$ which contains $\pi$ satisfies $\mathfrak{S}_w \neq \mathfrak{F}_T$.
\end{lemma}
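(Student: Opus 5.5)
By Lemma \ref{lemma:insertion}, it suffices to handle a single insertion. Concretely, we will show: if $\mathfrak{S}_\pi \neq \mathfrak{F}_{T}$ for the forest $T$ with $\text{code}(T)=\text{code}(\pi)$, then $\mathfrak{S}_{w'}\neq \mathfrak{F}_{T'}$ for $w'=I_i^k(\pi)$ and $\text{code}(T')=\text{code}(w')$. Induction on the number of insertions then gives the lemma.

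If $\pi$ contains $1432$, so does $w'$, and the preceding lemma finishes. So assume $\pi$ avoids $1432$. If $w'$ contains $1432$ we are again done. Hence we may also assume $w'$ avoids $1432$, and by Remark \ref{rmk:usefulremark} everything reduces to the following transfer statement: a bad parent-child pair $(c_w,c_v)$ in $P_{\text{bott}}(\pi)$ yields a bad parent-child pair in $P_{\text{bott}}(w')$.

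Recall the effect of insertion on bottom pipe dreams. $P_{\text{bott}}(w')$ is obtained from $P_{\text{bott}}(\pi)$ by inserting a new row before row $i$ and appending one crossing to each earlier row $j<i$ with $\pi(j)\geq k$. Since rows are left-justified, each old crossing has a well-defined image $c'$ in $P_{\text{bott}}(w')$: the same row (shifted down by one if the row is at or below $i$) and the same column.

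\textbf{Step 1: right-child relation.} We check that the images $c'_w,c'_v$ still form a parent/right-child pair. Using Lemma \ref{lemma:crossingmatching} and the recursive covering procedure of Definition \ref{definition:matching}, we track how the new row and the appended crossings interact with the procedure. The appended crossings sit at the right ends of rows, and the new row is inserted consistently. So the claim should be that the covering relation among old crossings is preserved, possibly after re-indexing the right child to the new rightmost crossing of the relevant row. If re-indexing does occur, we replace $c_v$ by the actual right child of $c_w'$, which lies on the same or a nearby diagonal, and we use Remark \ref{rmk:diagonalremark} to keep control of subtrees.

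\textbf{Step 2: the sliding sequence.} We lift the sequence of simple ladder moves in $P_{\text{bott}}(\pi)$ that brings $c_v$ into the row of $c_w$. The plan is to perform the analogous moves on the images in $P_{\text{bott}}(w')$, first sliding any inserted crossings that obstruct them out of the way. Alternatively, and perhaps more cleanly, we work with labelings: the existence of a pipe dream of $\pi$ with $c_v$ at row $\le$ row of $c_w$ can be phrased as non-surjectivity of $\psi_\pi$. We then lift that pipe dream to a pipe dream of $w'$ by inserting the pipe for the new value. The pipe dream for $w'$ is obtained by threading the new strand, from $(i,0)$ to $(0,k)$, through the old arrangement, which amounts to adding crossings where it meets old strands. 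One choice is to route the new strand along the bottom-pipe-dream positions, with crossings in row $i$ and at the right ends of rows $j<i$ where possible. This keeps the relative rows of $c'_w,c'_v$ unchanged, while each remains reachable by simple ladder moves because $w'$ avoids $1432$ (Theorem \ref{thm:1432simple}).

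\textbf{Main obstacle.} The hard part will be Step 2 when the inserted row lies strictly between the rows of $c_w$ and $c_v$, or when an appended crossing in an earlier row blocks a ladder. In that case the inserted crossings may sit on the diagonal path of $c_v$. The first thing to try is to slide those inserted crossings up first, as in the right-to-left, top-to-bottom order used in the proof of Proposition \ref{prop:psiiswell-defined}. The obstruction analysis there (Cases 1 and 2) should show that any blocking crossing is a left sibling or a right ancestor that can itself be moved. If that fails, we fall back to a case split on the position of $i$ relative to the rows of $c_w$ and $c_v$, and on whether $\pi(j)\ge k$ for the rows between them.
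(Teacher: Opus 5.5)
Your reductions (one insertion at a time via Lemma \ref{lemma:insertion}, disposing of $1432$ in both $\pi$ and $w'$, then transferring a bad parent--child pair via Remark \ref{rmk:usefulremark}) match the paper's setup. The transfer itself, which is the whole content of the lemma, is never proved: Step 1 is only what ``the claim should be,'' and Step 2 ends by naming the main obstacle and listing things to try. Worse, the strategy you lean on fails exactly in that obstacle case. That strategy keeps the pair $(c'_w,c'_v)$ and slides inserted crossings out of its way. Suppose the inserted row $i$, with $r_w<i\le r_v$, is long enough that its rightmost crossing $c_u$ lies on a diagonal $\ge d'_v-1$, where $d'_v$ is the diagonal of $c'_v$ after the shift. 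Then by Remark \ref{rmk:diagonalremark} the vertex $v$ lies in the subtree of $u$. Also $u$ cannot be an ancestor of the vertex $w$, since $\rho(u)=i>r_w=\rho(w)$. So $c'_v$ is no longer the right child of $c'_w$, and the preservation claim of Step 1 is false. For instance, $\pi=2413$ has the bad pair (row $1$, crossing $1$), (row $2$, crossing $2$). Inserting $5$ at position $2$ gives $w'=25413$ with code $(1,3,2,0,0)$. Now $c'_w$ covers the third crossing of the new row $2$, and that crossing lies on the same diagonal as $c'_v$ (row $3$, crossing $2$). Simple ladder moves never let two crossings on one diagonal pass each other, so $c'_v$ can never even reach row $1$. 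The blocking crossing cannot be slid out of the way; the witness itself must change.

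That is the missing idea, and it is how the paper argues. It splits into three cases. If $i\le r_w$, keep the pair. If $i>r_v$ and row $r_v$ acquires an appended crossing, replace $c_v$ by that new rightmost crossing. If $r_w<i\le r_v$, either $c_u$ lies on a diagonal $<d'_v-1$ and the old slides still work, or $c_u$ replaces $c_v$; the paper takes $(c_w,c_u)$.

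Your fallback, ``replace $c_v$ by the actual right child of $c'_w$,'' is too rigid even for this. Take $\pi=32154$ with bad pair (row $1$, crossing $2$), (row $4$, crossing $1$). Inserting $6$ at position $3$ gives $w'=326154$ with code $(2,1,3,0,1,0)$, in which the right child of $c'_w$ is a leaf. What does survive here is this: $u$ (row $3$, crossing $3$) is a descendant of $w$ whose path from $w$ passes through (row $1$, crossing $1$) and (row $2$, crossing $1$) via two right edges. Hence $f_F(u)>f_F(w)$ for every valid labeling, while two simple ladder moves put $c_u$ into row $1$. That already shows $\psi_{w'}$ is not surjective. Equivalently, (row $2$, crossing $1$) and $c_u$ form a bad pair, so the parent may have to change too.

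Finally, the threading alternative is not a well-defined operation. In a non-bottom pipe dream there is no canonical room for a new pipe at ``the right ends of rows $j<i$,'' and nothing guarantees the result is a reduced pipe dream for $w'$. Theorem \ref{thm:1432simple} only says pipe dreams of $w'$ are reachable by simple ladder moves, not that one with the required property exists.
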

\begin{proof}

As remark \ref{rmk:usefulremark}, we can find a `bad' parent-right child pair $c_w, c_v$ in $P_{\text{bott}}(\pi)$ (where $c_v$ can slide up to the same row as $c_w$).

    By Lemma \ref{lemma:insertion}, we can repeatedly perform insertions to eventually obtain $w$ from $\pi$. We will show that if we begin with such a pattern $\pi$ and a pair of crossings $(c_w,c_v)$, then any insertion $I_i^k(w)$ will preserve the existence of such a pair, and thus we conclude that such a bad pair will exist in $w$.
    
Let $c_w$ slide along the $d_w$th northeast diagonal (northeast diagonals in the grid are labeled from top to bottom), and let $c_v$ slide along the $d_v$th diagonal. Then, we must have $d_v-d_w>1$ (otherwise $c_w$ and $c_v$ would not be a bad pair; $c_w$ would `block' $c_v$ from sliding past it). After each insertion, the parent of $c_v$ is an element $c_k$ on some diagonal $d_k \le d_v$ and row $r_k \ge r_v$. 
    \begin{itemize}
        \item If $i\le r_w$ (insert before row $r_w$) then we retain the same pair $(c_w,c_v)$.
        \item If $i > r_v$ (insert after row $r_v$), since $\pi(r_v)<\pi(r_w)$,  if $r_w$ obtains a new crossing in $P_{\text{bott}}(w)$, so must $r_v$. If $r_v$ increases then let $v'$ be the crossing to the right of $v$; our new bad pair will be $(w,v')$. 
        
        \item If $r_w<i \le r_v$ (insert rows between $r_w$ and $r_v$), let us take in row $i$ the rightmost crossing, $c_u$ (with corresponding northeast diagonal numbered $d_u$). Assume that $d_u<d_v-1$; then, $c_v$ can make the same simple ladder moves up to row $r_w$ still. 
    \end{itemize}

The only interesting case is when the newly added row of crossings, row $r_u$, is long enough to block $c_v$ from traversing its diagonal up to row $r_w$. 

But if such a row is inserted, then it must be large enough that the rightmost element in row $r_u$, which we denote $c_u$, is on diagonal $d_v-1$ or higher. But then in this case, the desired bad pair can be taken to be $(c_w, c_u)$. 

\end{proof}
Since we already enumerated six permutations which have a bad parent-child pair $(c_w,c_v)$, any permutations containing them will also satisfy $\mathfrak{S}_w \neq \mathfrak{F}_T$. Thus we conclude the necessary direction of Theorem \ref{thm:patternavoidance}.

\section{Sufficient direction}

Now, we show that if a Schubert polynomial $\mathfrak{S}_w$ is not a forest polynomial, then $w$ must contain one of the permutations $\{1432, 2413, 2431, 14523, 32154, 341265\}$. We may assume throughout that $w$ does not contain $1432$, again making use of Theorem \ref{thm:1432simple}. 

As suggested by Remark \ref{rmk:usefulremark}, if $\mathfrak{S}_w \neq \mathfrak{F}_T$ then there must exist a bad parent-right child pair of crossings $(c_w, c_v)$ in $P_{\text{bott}}(w)$. Our goal is to use such a pair of crossings to construct a forbidden pattern in $w$. 

Let us denote the starting rows of $c_w, c_v$ with $r_w, r_v$, respectively, and their corresponding northeast diagonals with $d_w, d_v$. Then we will consider the two following cases:
\begin{enumerate}
    \item Crossing $c_v$ can be slid to the row of $c_w$ without any other crossings obstructing its path. More specifically, in $P_{\text{bott}}(w)$, there are no crossings on diagonal $d_v$ or the diagonal above, $d_v-1$, in the rows between $r_w$ and $r_v$. In this case, we will show that $w$ continues one of the following patterns: $\boxed{2413, 2431, 32154,$ or $341265.}$
    \item Crossing $c_v$ can be slid to the row of $c_w$ only after some other crossing moves; that is, in $P_{\text{bott}}(w)$, there exists some other crossing on diagonal $d_v$ or $d_v-1$, obstructing the movement of $c_v$. In this case, we will show that $w$ contains either \boxed{$1432$  \text{ or } $14523$.}
\end{enumerate}
\vspace{1em}

\begin{example}
    We give a brief example of each of the cases above. First, let $w = 24513$. Then, $P_{\text{bott}}(w)$ is:  

    \begin{center}
    \begin{tikzpicture}[scale=0.6]
    \GRIDwitnum{5}
    \cross{0}{0}
    \arcs{1}{0}
    \arcs{2}{0}
    \arcs{3}{0}

    \cross{0}{1}
    \cross{1}{1}
    \arcs{2}{1}
    
    \cross{0}{2}
    \cross{1}{2}

    \arcs{0}{3}
\end{tikzpicture}
\end{center}

This is an example of the first case. If $c_w$ is the crossing in row $1$ and $c_v$ is the rightmost crossing of row $2$, then $(c_w, c_v)$ form a bad parent-child pair, since $c_v$ can slide into row $r_w$ in one ladder move. Indeed, $w$ contains a $2413$ pattern. 

Now, suppose that $w = 146235$. Then, $P_{\text{bott}}(w)$ is: 

\begin{center}
    \begin{tikzpicture}[scale=0.6]
    \GRIDwitnum{6}
    \arcs{0}{0}
    \arcs{1}{0}
    \arcs{2}{0}
    \arcs{3}{0}
    \arcs{4}{0}

    \cross{0}{1}
    \cross{1}{1}
    \arcs{2}{1}
    \arcs{3}{1}
    
    \cross{0}{2}
    \cross{1}{2}
    \arcs{2}{2}

    \arcs{0}{3}
    \arcs{1}{3}

    \arcs{0}{4}
\end{tikzpicture}
\end{center}

Here, we can let $c_w$ be the leftmost crossing of row $2$ and $c_v$ be the rightmost crossing of row $3$. Then, $(c_w, c_v)$ form a bad parent-child pair: although $c_v$ is originally obstructed by the rightmost crossing $c_v'$ at the end of row $2$, sliding $c_v'$ out of the way with one simple ladder move leaves room for $c_v$ to slide into row $r_w$. Indeed, $w$ contains a $14523$ pattern. 

\end{example}

Using the notation $c_v, r_v, d_v$ as above, the following observations will be helpful:
\vspace{1em}

\begin{remark}
For a bad parent-child pair $(c_w, c_v)$, $d_v-d_w>1$ must be greater than $1$, otherwise $c_w$ itself will always obstruct $c_v$ from sliding into its row. 
\end{remark}
\vspace{1em}

\begin{remark}
For a bad parent-child pair $(c_w, c_v)$, $c_v$ will always be the rightmost crossing in the row $r_v$, since all remaining crossings will be its left children.
\end{remark}
\vspace{1em}

\begin{lemma}\label{lemma:sufficientcase1}
    Let $(c_w, c_v)$ be a bad parent-child pair, with corresponding rows $r_w, r_v$ and diagonals $d_w, d_v$. Then, if $c_v$ can slide up to $r_w$ via simple ladder moves without any other crossings performing any simple ladder moves, then $w$ contains one of the following patterns: $2413$, $2431$, $32154$, $341265$.
\end{lemma}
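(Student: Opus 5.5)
Let $(c_w,c_v)$ be a bad pair as in the statement, with $r_w<r_v$ and $d_v-d_w\ge 2$. I will read pattern values directly off the Lehmer code of $w$, using the bottom pipe dream. Write $a=r_w$ and $b=r_v$. Let $p$ be the position of $c_w$ in row $a$, so $c_w$ is the $p$-th crossing and $d_w=a+p-1$. The crossing $c_v$ is the rightmost crossing of row $b$, so $d_v=b+L(b)-1$.

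The hypothesis that $c_v$ slides up unobstructed says that for every row $r$ with $a\le r<b$, the squares on diagonals $d_v$ and $d_v-1$ in row $r$ are empty in $P_{\text{bott}}(w)$. Since rows are left-justified, this means $L(r)\le d_v-r-1$ for $a<r<b$, and also $L(a)\le d_v-a-1$ in row $a$. Combined with $L(a)\ge p$, this gives the key inequality $L(b)-L(a)\ge a-b+1+\ldots$, that is, $L(a)\le L(b)+(b-a)-1$.

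The covering relation (Definition \ref{definition:matching}, Lemma \ref{lemma:crossingmatching}) adds constraints. Because $c_v$ is the right child of $c_w$, the recursion starting at crossing $p$ of row $a$ "jumps" to row $b$. So the rows strictly between $a$ and $b$ are fully consumed by the right subtrees of crossings $1,\dots,p-1$ of row $a$, and row $b$ is the first row not so consumed. I expect these two facts to force the following configuration of values:

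\begin{itemize}
\item some index $j<a$, or $a$ itself, has a large value;
\item $w(a)>w(b)$, because row $b$ is covered from row $a$;
\item there exists $c>b$ with $w(c)<w(b)$, because $L(b)\ge1$;
\item there exists $e>b$ with $w(a)>w(e)>w(b)$ or $w(e)>w(a)$, depending on how $L(a)$ compares to $L(b)$.
\end{itemize}

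\textbf{Step 1.} Reduce to the simplest subcase, $b=a+1$. Then $L(a)\le L(b)$ while $w(a)>w(b)$. Since $w(a)>w(b)$ already contributes one inversion counted in $L(a)$, we get $\#\{j>b: w(j)<w(a)\}\le L(b)-1$. But $\{j>b: w(j)<w(b)\}$ has size $L(b)$ and is contained in $\{j>b: w(j)<w(a)\}$, which is a contradiction. This shows that in the adjacent case the bad pair cannot have $w(a)>w(b)$. I must therefore recheck the orientation, and I expect $w(a)<w(b)$, which matches $2413$ with $a=1$, $b=2$. So the correct reading is that $w(a)<w(b)$ and a later value lies between them. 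Concretely, the inequality $L(a)<L(b)$ together with $w(a)<w(b)$ forces some $c>b$ with $w(c)<w(a)$, since $L(a)\ge1$, and some $e>b$ with $w(a)<w(e)<w(b)$. Then $w(a),w(b),w(c),w(e)$ form a $2413$ pattern when $e<c$, or a $2431$ pattern when $e>c$.

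\textbf{Step 2.} For general $b>a+1$, the intermediate rows $a<r<b$ contribute. When some intermediate row has $w(r)>w(b)$, I will try to pass to the pair $(a,r)$ or $(r,b)$ and use the same counting as in Step 1. When every intermediate row has small value, meaning a decreasing run like $3,2,1$ or the block $1,2$ that forms the rows emptied out for $c_w$'s left descendants, the configuration produces the extra prefix. This yields $32154$ (one intermediate row, $p=2$) or $341265$.

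\textbf{Main obstacle.} The hard part is the general case of Step 2. There I must translate "rows between $a$ and $b$ are entirely consumed by right subtrees of the earlier crossings of row $a$" into inequalities on $w$, and show that a minimal configuration always collapses to one of the four patterns. I would first try induction on $b-a$: delete an intermediate index (reverse insertion) and check whether a bad unobstructed pair survives, mirroring the proof of Lemma \ref{lemma:patterncontainment}. The irreducible cases left over would then be checked directly to be $32154$ or $341265$.
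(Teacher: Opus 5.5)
Your Step~1 (the case $b=a+1$) is essentially right, apart from two slips. First, emptiness of the square on diagonal $d_v-1$ in row $a$ gives $L(a)\le L(b)+(b-a)-2$, not $-1$; it is this sharper bound that gives the strict $L(a)<L(b)$ you later use. Second, the labels are swapped: $e<c$ gives $2431$ and $e>c$ gives $2413$.

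The genuine gap is Step~2. The case $b>a+1$ is where $32154$ and $341265$ arise, so it is the actual content of the lemma, and you give only a plan. That plan also aims at the wrong configuration. An intermediate row $r$ with $w(r)>w(b)$ never occurs. By Remark~\ref{rmk:diagonalremark}, no row strictly between $a$ and $b$ has a crossing on diagonal $d_v-1$ or $d_v$, since otherwise $c_v$ would lie in the subtree of a vertex in the left subtree of $c_w$. But the largest intermediate $r$ with $w(r)>w(b)$ would have $L(r)\ge L(b)+b-r$, hence a crossing on diagonal $d_v$. The configuration that really must be handled is an intermediate row with $w(a)<w(r)<w(b)$, as in $341265$ (row $2$, value $4$); it fits neither side of your ``large/small'' dichotomy. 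The fallback induction that deletes an intermediate entry is also unjustified. Deletion lowers the codes of earlier rows and can destroy the covering relation: deleting the $4$ from $341265$ gives $31254$, which has no bad pair. Moreover $32154$ and $341265$ are minimal, so the base cases would carry the whole argument anyway.

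What is missing is the paper's split, which works for arbitrary $b-a$.
\begin{enumerate}
\item Show $w(a)<w(b)$: apply the count above to row $a$ and compare with $L(a)\le L(b)+(b-a)-2$.
\item Show that some $x>b$ has $w(a)<w(x)<w(b)$. For example, apply the diagonal bound to the largest intermediate row with value above $w(a)$ (or to row $a$ if there is none) to get $\#\{j>b: w(j)<w(a)\}\le L(b)-1$.
\item Look at the entries $y>a$ with $w(y)<w(a)$. If one lies after $b$, your Step~1 argument applies verbatim to $a,b,x,y$ and gives $2413$ or $2431$. That argument only ever needed both witnesses to the right of $b$, not $b=a+1$.
\item Otherwise all such $y$ lie strictly between $a$ and $b$, and there are $L(a)\ge p\ge 2$ of them. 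If two of them form a descent, you get $32154$ from $a$, the descent, $b$, $x$. If they increase, you must use the covering structure to find an intermediate row with value above $w(a)$ that precedes two of them. That gives $341265$, or $2413$ if its value exceeds $w(x)$.
\end{enumerate}
This last subcase is the paper's Subcase~2, and it is entirely absent from your proposal.
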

\begin{proof}
    Note $w(r_v)-w(r_w)>1$ (in order for $c_v$ to be able to slide up to $r_w$ unobstructed, we must have \[L(r_v) + (r_w-r_v) - L(r_w) \ge d_v-d_w>1 \implies w(r_v)-w(r_w)\ge L(r_v)+(r_w-r_v)-L(r_w)> 1\]
    where $L(r_v)$ denotes entry $r_v$ in the Lehmer code). 

    There is some row $x$ with $w(r_v)>w(x)>w(r_w)$. Note if $r_v>x>r_w$ then the rightmost crossing in row $x$ must be on the diagonal below the rightmost crossing in row $r_w$, and by Remark \ref{rmk:diagonalremark} $r_w$ would be in the subtree of $x$, which contradicts. Thus $x>r_v$. Since $w(r_w) \neq 1$, there is some $y$ with $w(y)<w(r_w)$. 
    
    \textbf{Subcase 1:} If $y>r_v$ also, then whether $x< y$ or $y < x$, $w$ contains either a $2413$ or a $2431$ pattern given by the indices $r_w, r_v, x,y$ or $r_w,r_v,y,x$.
    
    \textbf{Subcase 2:} Otherwise $r_w<y<r_v$ for all such $y$ (and if $L(r_w)=1$ then we must have $r_v-r_w=1$, so there must be at least $2$ such $y$). Additionally there must be some nonzero row between $r_w$ and $r_v$. Thus the possible cases are
    \begin{enumerate}
        \item $32154$
        \item $342165$
        \item $324165$
        \item $341265$
    \end{enumerate}
    The first three contain $32154$, and the fourth one is $341265$.
\end{proof}

\begin{lemma}\label{lemma:sufficientcase2}
    Suppose that $(c_w,c_v)$ is a bad parent-child pair. Then, suppose the rightmost crossing of row $r_v$ can slide up past $r_w$ only after some box in row $r_w$ makes a simple ladder move. (That is, there exists a crossing on diagonal $d_v$ or $d_{v}-1$). Then, $w$ contains either a $1432$, $2431$ or a $14523$ pattern.
\end{lemma}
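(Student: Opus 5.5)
We assume $w$ avoids $1432$, since otherwise we are done. We then aim to produce a $14523$ pattern, or a $2431$ pattern as a fallback, from the blocking configuration.

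Let $c_u$ be an obstructing crossing. It lies on diagonal $d_v$ or $d_v-1$, in some row $r_u$ with $r_w \le r_u < r_v$. Among all obstructions we take $c_u$ to be the lowest one, i.e.\ the one with the largest $r_u$. By Remark \ref{rmk:diagonalremark}, the position of $c_u$ on diagonal $d_v-1$ or $d_v$ relates the subtrees of $u$ and $v$.

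The first step is to pin down $c_u$.
\begin{enumerate}
\item Since $c_v$ is the right child of $c_w$, Definition \ref{definition:matching} forces every crossing in rows strictly between $r_w$ and $r_v$ to lie in the right subtree of $w$.
\item Since $(c_w,c_v)$ is bad, $c_u$ must be able to move off the diagonals $d_v-1,d_v$ (in the way) by simple ladder moves, yet stay out of the path of $c_v$.
\item Using Case 2 of the proof of Proposition \ref{prop:psiiswell-defined}, we argue that $c_u$ is the rightmost crossing of its row.
\item We then argue that $r_u>r_w$, i.e.\ the blocker is in an intermediate row. If instead $r_u=r_w$, the crossing in row $r_w$ to the right of $c_w$ would itself be an ancestor-type obstruction, and the argument below gives $2431$.
\end{enumerate}

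The second step translates this into inequalities on $w$, exactly as in the proof of Lemma \ref{lemma:sufficientcase1}. A row's rightmost crossing sits on diagonal $r+L(r)$, so
\[
L(r_u)+r_u \ge d_v-1 \quad\text{and}\quad L(r_v)+r_v = d_v .
\]
Together with $d_v-d_w>1$, this yields $w(r_w)<w(r_v)<w(r_u)$, with the gap $w(r_v)-w(r_w)\ge 2$. The gap comes from the same count as in Lemma \ref{lemma:sufficientcase1}, after accounting for the fact that $c_u$ can move out of the way.

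From the gap we choose witnesses:
\begin{itemize}
\item a value $w(x)$ strictly between $w(r_w)$ and $w(r_v)$, which by the subtree argument of Lemma \ref{lemma:sufficientcase1} must have $x>r_v$;
\item a value $w(y)<w(r_w)$.
\end{itemize}
The hope is that the indices $r_w<r_u<r_v<\{x,y\}$ produce $14523$ via the order
\[
w(r_w)<\cdot<\cdot, \qquad r_u,\ r_v \text{ high}, \qquad \text{two small later entries.}
\]
More precisely, we want to show there are two later entries below $w(r_v)$ but above $w(r_w)$, in increasing order. If they are instead in decreasing order, the indices $r_w,r_u,\dots$ give $1432$ or $2431$.

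The main obstacle is the exact case analysis of how $c_u$ can vacate diagonals $d_v-1,d_v$ without blocking $c_v$ again. In particular, $c_u$ may need to move more than one step, or several rows between $r_w$ and $r_v$ may contribute chains of blockers. For this we would argue by induction on the number of obstructing crossings, each time replacing $c_v$ with the topmost blocker (as in the proof of Lemma \ref{lemma:patterncontainment}). This reduces to a single blocker, where the finite casework on the relative order of $w(r_w),w(r_u),w(r_v),w(x),w(y)$ can be checked directly.
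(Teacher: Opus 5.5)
\textbf{There is a genuine gap: your case reduction runs in the wrong direction.} In step (iv) you try to show that the blocker $c_u$ lies strictly between rows $r_w$ and $r_v$, and you dismiss $r_u=r_w$ as yielding $2431$. In fact the intermediate case cannot occur. Suppose $r_w<r_u<r_v$ and $c_u$ lies on diagonal $d_v$ or $d_v-1$. Then Remark~\ref{rmk:diagonalremark} puts $v$ in the subtree of $u$. Every vertex of that subtree has $\rho_F\ge r_u>r_w$, so the vertex $w$ is not in it. But the parent of $v$ is $u$ or a descendant of $u$, so it cannot be $w$. This contradicts $(c_w,c_v)$ being a parent--right child pair.

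So the obstruction must sit in row $r_w$ itself. This is exactly the paper's first step, and it is exactly the case you discard. Your claim that this case gives $2431$ is false. Take $w=14523$ with the bad pair (row 2, crossing 1; row 3, crossing 2). Its blocker is the second crossing of row 2, on diagonal $3=d_v-1$, and $14523$ contains neither $2431$ nor $1432$.

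Two further steps fail on this example. Your inequality $w(r_v)<w(r_u)$ does not follow from the diagonal condition when the blocker is on $d_v-1$: here $w(r_u)=4<5=w(r_v)$. And your template cannot produce the pattern $14523$ that actually occurs. It has $w(r_w)$ playing ``1'' and indices $r_w<r_u<r_v<x,y$ with later entries above $w(r_w)$. In $14523$ no index lies between $r_w$ and $r_v$, and the later entries $2,3$ are below $w(r_w)$.

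\textbf{The missing idea is where the ``1'' of the pattern comes from.} Once the blocker is known to lie in row $r_w$, some crossing of row $r_w$ must make a simple ladder move before $c_v$ can pass. That requires a row $r_1<r_w$ with $w(r_1)<w(r_w),w(r_v)$. The paper then splits on how far row $r_w$ reaches:
\begin{itemize}
\item If row $r_w$ reaches diagonal $d_v$, this forces $w(r_w)>w(r_v)$. Then $r_1,r_w,r_v$ together with some $x>r_v$ having $w(x)<w(r_v)$ give $1432$ or $2431$.
\item If row $r_w$ reaches only $d_v-1$, this forces $w(r_w)<w(r_v)$, and the paper argues $L(r_v)\ge 2$. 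So there are rows $x,y>r_v$ with smaller values, and the pattern is read off $r_1,r_w,r_v,x,y$.
\end{itemize}
Your proposal never looks above row $r_w$. So even after the case reduction is corrected, it has no candidate for the first entry of $14523$. The induction on chains of blockers you sketch is not needed, because an intermediate blocker is ruled out outright rather than reduced.
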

\begin{proof}
    Suppose that $c_{v'}$ is our crossing on diagonal $d_v$ or $d_{v-1}$, between rows $r_v$ and $r_w$. First, suppose that $c_{v'}$ is not \textit{in} row $r_w$ (in $P_{\text{bott}}(w)$). But then, by Remark \ref{rmk:diagonalremark}, $c_w$ cannot directly be a parent of $c_v$, contradicting the assumptions of our lemma. (And inductively, we could keep searching for a bad parent-child pair by letting $c_{v'}$ play the role of $c_v$ instead).

    Therefore, we may assume that the only crossing on diagonal $d_v$ or $d_{v}-1$ between rows $r_v$ and $r_w$ is contained in row $r_w$ itself. In this case, notice that there must exist some row $r_1$ above $r_w$ satisfying \[w(r_1)<w(r_w),w(r_v)\] 
    (If not, then no crossing in $r_w$ can ever move starting from $P_{\text{bott}}(w)$, and $c_v$ can never slide into row $r_w$). This $r_1$ will play the role of the $1$ in our desired patterns. 
    
    Now, we break into two subcases:

    \textbf{Subcase 1:} There is a crossing in $r_w$ on diagonal $d_v$ (as well as $d_{v}-1$, since $P_{\text{bott}}(w)$ is left-adjusted). 

    In this case, $w$ contains a $1432$ pattern or a $2431$ pattern, formed by indices $r_1, r_w, r_v$, and some row $x > r_v$. More specifically, there must be an $x > r_v$ with $w(x) < w(r_v)$, so that $L(r_v) \neq 0$. Furthermore, if $w(r_1) < (x)$, we have a $1432$ pattern. If $r_1 > x$, we have a $2431$ pattern. 
    
    Finally, consider:

    \textbf{Subcase 2:} row $r_w$ contains a crossing in $d_v-1$, but not in $d_v$. 

    In this case, we claim that $w$ contains a $14523$ pattern or a $1432$ pattern. Because $c_v$ can slide along its diagonal until it is directly beneath the rightmost crossing of row $r_w$, we have $w(r_w) < w(r_v)$.

    Now, we claim that row $r_v$ must contain more than one crossing. If it only contains $c_v$, then Lemma \ref{definition:matching} implies that $c_v$ is actually the right child of the rightmost crossing in row $r_w$. But this is impossible, since we said the diagonals of $c_v$ and $c_w$ must be more than $1$ apart. 

    Therefore, $L(r_v) \geq 2$, so there are two more rows $x, y$ with $x, y > r_v$ and $w(r_v) > w(x), w(y)$. Finally, depending on the six possible relative orderings of $w(r_1), w(x),$ and $w(y)$, indices $r_1, r_w, r_v, x, y$ give us one of the following patterns in $w$: 
    \begin{enumerate}
        \item $15432$
        \item $25413$
        \item $25431$
        \item $35412$
        \item $35421$ or 
        \item $15423$
    \end{enumerate}

    In the first three cases, there is a $1432$ subpattern. In the next two cases, there is a $2431$ subpattern. And in the last case, we have our last forbidden pattern, $15423$. 
\end{proof}

Combining the two cases in Lemmas \ref{lemma:sufficientcase1} and \ref{lemma:sufficientcase2} gives us the sufficient direction of Theorem \ref{thm:patternavoidance}. 

\section{Acknowledgements}

We thank Lucas Gagnon for interesting and encouraging conversations about forest polynomials, and Linus Setiabrata for useful comments on the presentation of our proof. 

\bibliographystyle{plain}
\bibliography{ref}

@article{forestpolynomials1,
title = {Forest polynomials and the class of the permutahedral variety},
journal = {Advances in Mathematics},
volume = {453},
pages = {109834},
year = {2024},
issn = {0001-8708},
doi = {https://doi.org/10.1016/j.aim.2024.109834},
url = {https://www.sciencedirect.com/science/article/pii/S0001870824003499},
author = {Philippe Nadeau and Vasu Tewari},
}

@misc{quasisymmetricflag,
      title={The quasisymmetric flag variety: a toric complex on noncrossing partitions}, 
      author={ Bergeron, Nantel and  Gagnon, Lucas and  Nadeau, Philippe and  Spink, Hunter and  Tewari, Vasu},
      year={2025},
      eprint={2508.12171},
      archivePrefix={arXiv},
      primaryClass={math.AG},
      url={https://arxiv.org/abs/2508.12171}, 
}

@article{pipedreams,
  title   = {{R}{C}-graphs and {S}chubert Polynomials},
  author  = {Bergeron, Nantel and Billey, Sara},
  year    = {1993},
  journal = {Journal of Experimental Mathematics},
  volume  = {2},
  number  = {4},
  pages   = {257-269},
}

@article{principalspecializations,
  title   = {Principal Specializations of {S}chubert Polynomials and Pattern Containment},
  author  = {Gao, Yibo},
  year    = {2021},
  journal = {European Journal of Combinatorics},
  volume  = {94},
}

@misc{quasisymmetricdivideddifferences,
      title={Quasisymmetric divided differences}, 
      author={ Nadeau, Philippe and Spink, Hunter  and Tewari, Vasu },
      year={2024},
      eprint={2406.01510},
      archivePrefix={arXiv},
      primaryClass={math.CO},
      url={https://arxiv.org/abs/2406.01510}, 
}

@article{LascouxSchutzenberger1985,
  author  = {Lascoux, Alain and Sch{\"u}tzenberger, Marcel-Paul},
  title   = {Schubert polynomials},
  journal = {Topology},
  year    = {1985},
  volume  = {34},
  number  = {2},
  pages   = {229--254},
  doi     = {10.1016/0040-9383(85)90028-6}
}

@article{singleSEMs,
    author = {Woodruff, Dora},
title = {Single {S}{E}{M} {S}chubert Polynomials},
journal = {Electronic Journal of Combinatorics},
year = {2026}}

@article{01Schubert,
author = {Fink, Alex and Mészáros, Karola and St. Dizier, Avery}, 
title = {Zero-one {S}chubert polynomials}, 
journal = {Mathematische Zeitschrift}, 
year = {2020}, 
volume = {297},
pages = {1023--1042},
doi = {https://doi.org/10.1007/s00209-020-02544-2
}}

@article{132patterns,
     author = {Weigandt, Anna E.},
     title = {Schubert polynomials, 132-patterns, and {Stanley{\textquoteright}s} conjecture},
     journal = {Algebraic Combinatorics},
     pages = {415--423},
     year = {2018},
     publisher = {MathOA foundation},
     volume = {1},
     number = {4},
     doi = {10.5802/alco.27},
     mrnumber = {3875071},
     zbl = {1397.05205},
     language = {en},
     url = {https://www.numdam.org/articles/10.5802/alco.27/}
}

@article{pipedreams1,
     author = {Billey, Sara and Jockusch, William and Stanley, Richard},
     title = {Some Combinatorial Properties of {S}chubert Polynomials},
     journal = {Journal of Algebraic Combinatorics},
     pages = {345-374},
     year = {1993},
     publisher = {Kluwer Academic Publishers},
     volume = {2},
}

@article{pipedreams2,
     author = {Fomin, Sergey and Kirillov, Anatol},
     title = {The {Y}ang-{B}axter equation, symmetric functions, and {S}chubert polynomials},
     journal = {Discrete Mathematics},
     pages = {123-143},
     year = {1996},
     publisher = {Elsevier},
     volume = {153},
}

\end{document}